\newcommand{\ben}{\begin{enumerate}}
\newcommand{\een}{\end{enumerate}}
\newcommand{\bqu}{\begin{quote}}
\newcommand{\equ}{\end{quote}}
\newcommand{\beq}{\begin{equation}}
\newcommand{\eeq}{\end{equation}}
\newcommand{\bec}{\begin{center}}
\newcommand{\ece}{\end{center}}
\allowdisplaybreaks \setlength{\textwidth}{6.5in}
\numberwithin{equation}{section}
\newtheorem{theorem}{Theorem}[section]
\newtheorem{lemma}[theorem]{Lemma}
\DeclareMathOperator{\col}{col}
\DeclareMathOperator{\EHR}{EHR}
\DeclareMathOperator{\DEHR}{DEHR}
\DeclareMathOperator{\Types}{TYPES}
\DeclareMathOperator{\Type}{Type}
\theoremstyle{definition}
\newtheorem{defn}[theorem]{Definition}
\theoremstyle{remark}
\newtheorem{rem}[theorem]{Remark}
\title{Second order logic on random rooted trees}
\date{}
\author{Alexander Holroyd}
\address{Alexander E.\ Holroyd, \ Microsoft Research, \ 1 Microsoft Way, Redmond, WA 98052, United States.}
\email{holroyd@microsoft.com.}
\author{Avi Levy}
\address{Avi Levy, \ University of Washington, \ Box 354350, Seattle, WA 98195-4350, United States.}
\email{avius@uw.edu.}
\author{Moumanti Podder}
\address{Moumanti Podder, \ Courant Institute of Mathematical Sciences, \ New York University, \ 251 Mercer Street, New York, NY 10012, United States.}
\email{mp3460@nyu.edu.}
\author{Joel Spencer}
\address{Joel Spencer, \ Courant Institute of Mathematical Sciences, \ New York University, \ 251 Mercer Street, New York, NY 10012, United States.}
\email{spencer@cims.nyu.edu.}
\begin{document}
\bibliographystyle{plain}
\nocite{*}

\begin{abstract}
We address questions of logic and expressibility in the context of random rooted trees. Infiniteness of a rooted tree is not expressible as a first order sentence, but is expressible as an existential monadic second order sentence (EMSO). On the other hand, finiteness is not expressible as an EMSO. For a broad class of random tree models, including Galton-Watson trees with offspring distributions that have full support, we prove the stronger statement that finiteness does not agree up to a null set with any EMSO. We construct a finite tree and a non-null set of infinite trees that cannot be distinguished from each other by any EMSO of given parameters. This is proved via set-pebble Ehrenfeucht games (where an initial colouring round is followed by a given number of pebble rounds).
\end{abstract}

\subjclass[2010]{60C05, 03C85, 05C15, 03C64}

\keywords{Galton-Watson tree, finiteness of tree, existential monadic second order properties, almost sure expressibility}

\maketitle

\section{Introduction}\label{intro}
\sloppy The problem of expressibility in a given language parametrized by mathematical logic has been one of the classically studied problems. In our paper, the setting is the space of all rooted trees, and the language is the set of all existential monadic second order sentences on rooted trees. The property we specifically focus on is the finiteness of the rooted tree. 
\par Before we go into the details of the questions and how we seek to answer them, we point out here that such questions have important implications in descriptive complexity theory. Descriptive complexity measures the syntactic complexity of formulae that express a certain property, instead of its computation complexity. See \cite{immerman} and \cite{thomas} for more general discussions on this theory. A fundamental result in this area is the well-known \emph{Fagin's theorem}, which states that a property is in the class NP (non-deterministic polynomial time computability) if and only if it is describable as an existential second-order logical sentence (see \cite{fagin}). If one can prove that the class of all existential second-order sentences is not closed under negation, one shall establish that NP $\neq$ co-NP and therefore P $\neq$ NP. 
\par It is possible to express the property of infiniteness of the rooted tree, in a simple way, as an existential monadic second order sentence (EMSO) (see \cite{courcelle2}, \cite{courcelle1}, \cite{gottlob} and \cite{vaananen} for more on EMSO). This naturally raises the question as to whether finiteness of the rooted tree, the negation of infiniteness, can also be expressed as an EMSO. The objective of this paper is to tie in probability with this question. We ask if it is possible, under a measure $\mu$ that satisfies certain naturally occurring conditions, that finiteness is expressible as an EMSO on all but a subset of trees of measure $0$. We answer this question in the negative, and the result is stated in Theorem~\ref{main}. As a straightforward consequence of Theorem~\ref{main}, one can conclude that finiteness is not expressible tautologically as an EMSO. 

\begin{theorem}\label{main}
Let $\mu$ be a probability measure on $\mathcal{T}$ that assigns positive probability to every set of the form $\{T: T|_{n} = T'\}$, where $n \in \mathbb{N}$, and $T|_{n}$ denotes the truncation to depth $n$, and $T'$ is a finite tree. Then, the property of finiteness of rooted trees is not \emph{almost expressible} as an existential monadic second order sentence under the measure $\mu$. That is, there do not exist a $\mu$-null set of infinite trees $\mathcal{T}_{0}$ and an EMSO $A$, such that every finite tree satisfies $A$ and every infinite tree in $\mathcal{T}_{0}^{c}$ satisfies the negation of $A$.
\end{theorem}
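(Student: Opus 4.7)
The approach is to derive a contradiction via the set-pebble Ehrenfeucht game alluded to in the abstract. For parameters $(k, r)$, Spoiler first selects one of the two rooted trees and partitions its vertex set into $k$ colour classes (a witness for the $k$ existential set quantifiers); Duplicator responds with a $k$-colouring of the other tree; and they then play the standard $r$-round pebble game on the resulting coloured structures. The fundamental EF theorem in this setting says that if Duplicator has a winning strategy irrespective of Spoiler's choice of side, then the two trees satisfy the same EMSO sentences with $k$ existential set quantifiers and first-order quantifier depth $r$.

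Suppose, for contradiction, that $A$ and a null set $\mathcal{T}_{0}$ of infinite trees witness almost expressibility of finiteness, and let $(k, r)$ be the parameters of $A$. I aim to construct a finite tree $T_{\mathrm{fin}}$ and a measurable set $\mathcal{S}$ of infinite rooted trees with $\mu(\mathcal{S}) > 0$ such that Duplicator wins the $(k, r)$ set-pebble game on $(T_{\mathrm{fin}}, T)$ for every $T \in \mathcal{S}$. Then $T_{\mathrm{fin}}$ satisfies $A$ because it is finite, so by the fundamental theorem every $T \in \mathcal{S}$ also satisfies $A$; since $\mu(\mathcal{S} \setminus \mathcal{T}_{0}) > 0$, there exists an infinite $T \in \mathcal{T}_{0}^{c}$ with $T \models A$, contradicting the hypothesis that every such tree satisfies $\neg A$.

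For the construction I would use the recursive description of $(k, r)$-EF-types on coloured rooted trees: the $r$-type is given by (root colour, capped multiset of $(r{-}1)$-types of child subtrees), with threshold $M = M(k, r)$ depending only on $(k, r)$. Choose $T_{\mathrm{fin}}$ to be a finite tree that is \emph{type-saturated} down to some depth $d = d(k, r)$, meaning every internal vertex up to depth $d$ has at least $M$ children realising each possible $(r{-}1)$-subtree-type; such a tree is built by a bottom-up recursion on $r$. Let $\mathcal{S}$ be the set of infinite rooted trees whose top $d$ levels satisfy the same saturation pattern; this is a finite union of cylinder sets of the form $\{T : T|_{d} = T''\}$ intersected with infiniteness, and the cylinder hypothesis on $\mu$ guarantees $\mu(\mathcal{S}) > 0$. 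Saturation ensures that $T_{\mathrm{fin}}$ and every $T \in \mathcal{S}$ carry the same $(k, r)$-EMSO-type, so Duplicator's winning strategy is a straightforward type-matching via pigeonhole on the child subtrees.

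The main obstacle will be the inductive verification that type-saturation actually suffices: once Spoiler applies his $k$-colouring, Duplicator must colour the other tree in a way that maintains matchable child-subtree types at every level of the recursion, and the match must persist through all $r$ pebble rounds. Choosing $M = M(k, r)$ large enough to accommodate every possible colouring requires a pigeonhole argument combined with an induction on $r$. A secondary point is that the EMSO-type of a tree with saturated top $d$ levels is independent of what happens below depth $d$ (a consequence of the capped-multiplicity recursion), which is precisely what lets us take arbitrary infinite extensions inside $\mathcal{S}$ without disturbing the $(k, r)$-type.
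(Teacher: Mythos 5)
Your high-level skeleton is right: appeal to the Fagin/set-pebble game characterization of EMSO, then construct a single finite tree and a positive-$\mu$-measure set of infinite trees on which Duplicator wins, contradicting almost-expressibility. This matches the paper's reduction to its Theorem~\ref{main 1}. But the construction you sketch to realize this plan has a genuine gap, and the specific idea you propose (type-saturation of the top $d$ levels, with $\mathcal{S}$ all trees agreeing with $T_{\mathrm{fin}}$ to depth $d$) would not survive the induction you defer.

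The first problem is depth. After Spoiler colours $T_{\mathrm{fin}}$ and Duplicator colours $T \in \mathcal{S}$, the pebble rounds are played on \emph{coloured} trees, and Spoiler is free to pebble a node of $T$ at depth vastly exceeding the height of $T_{\mathrm{fin}}$. Nothing in "saturation of the top $d$ levels" tells Duplicator what to do then; the parent/child moves Spoiler subsequently makes near that deep node will force Duplicator to trace out an isomorphic local picture inside $T_{\mathrm{fin}}$, whose depth she cannot match. The paper defuses this by an \emph{enhanced colouring} that tags every vertex with its depth taken modulo a large even $D$ (together with exact depth up to a cutoff $D_{0}$), and by building both trees with a long bare path of length $D_{0}/2$ from the root. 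Conditions like \ref{main 2 cond 3} and \ref{main 2 cond 6} in the proof of Theorem~\ref{main 2}, and the whole "threaten" bookkeeping, exist precisely to control this depth attack; none of that is present in your sketch, and "pigeonhole plus induction on $r$" does not supply it.

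The second problem is that your set $\mathcal{S}$ is far too large. You allow arbitrary continuations below depth $d$, but Spoiler plays inside those continuations, and Duplicator's colouring of $T$ must produce node types (in the coloured sense) that $T_{\mathrm{fin}}$ can realize in matching counts. An arbitrary deep subtree can carry coloured types that no colouring of $T_{\mathrm{fin}}$ produces at all, and then Duplicator cannot even survive the types comparison, let alone the pebble rounds. The paper's family $\mathcal{T}_{2}$ is much more rigid: below the long path, both trees carry identical finite gadgets $T(S)$, many copies for each "deficient" set of types $S$, and the infinite tree carries exactly one extra branch that is allowed to be arbitrary. The gadgets are chosen via a compactness argument (Lemma~\ref{finite tree noncolourable}) guaranteeing finite witnesses of deficiency, and the pigeonhole is applied to the colourings Spoiler induces on these gadget copies. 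It is the combination of (a) the deficiency/adequacy dichotomy (Lemma~\ref{union ubi}) letting Duplicator colour the arbitrary branch so all its types are "covered," and (b) $k$-fold repetition of each gadget-with-colouring so that truncated-at-$k$ type counts agree, that makes the types game winnable (Theorem~\ref{main 3}). Your remark that "the EMSO-type of a tree with saturated top $d$ levels is independent of what happens below depth $d$" is precisely what fails without this machinery, so the "secondary point" you flag as routine is in fact the heart of the matter.

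In short: the reduction to a game statement is correct and is the same as the paper's, but the witnessing construction needs (i) a depth-encoding colouring refinement plus a long initial path, (ii) a constrained family $\mathcal{T}_{2}$ with only one arbitrary infinite branch, and (iii) the deficiency/compactness argument to show Duplicator can always colour that branch consistently. These are the missing ideas; without them the claimed "straightforward type-matching via pigeonhole" does not go through.
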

Such a measure assigns positive weight to every finite tree. In particular, we can consider the measure induced by the well-known Galton-Watson branching process with an offspring distribution $\chi$ that is supported on all of $\mathbb{N}_{0}$, the set of non-negative integers, with expectation greater than $1$ (i.e.\ the supercritical regime). An example is the Poisson distribution with expectation greater than $1$. In the following subsection, we set down the notations we use throughout the paper. 

\subsection{Some notation}\label{notation}
We denote by $T_{\mu}$ the random rooted tree which follows the measure $\mu$. For any tree $T \in \mathcal{T}$, we let $V(T)$ denote its set of nodes. For any $v \in V(T)$, we let $d(v)$ denote the depth of $v$ in $T$, where the root, usually denoted $\phi$, has depth $d(\phi) = 0$. For $v \in V(T)$, let $T(v)$ denote the subtree of $T$ that is rooted at $v$. When $v$ is a child of the root, we call $T(v)$ a \emph{principal branch} of $T$. Let $\pi(v)$ denote the parent of $v$, for any $v \in V(T) \setminus \{\phi\}$. For a positive integer $n$ and $T \in \mathcal{T}$, let $T|_{n}$ denote the truncation of $T$, consisting of all nodes of depth at most $n$. For a positive integer $k$, we set $[k] = \{0, 1, \ldots, k\}$. 

\subsection{EMSO on trees}
Existential monadic second order (EMSO) sentences on $\mathcal{T}$ are of the form
$$\exists \ S_{1}, \ldots, \exists \ S_{n} [P],$$
where $S_{1}, \ldots, S_{n}$ are subsets of nodes of the tree, and $P$ is a first-order sentence that involves the root as a constant symbol and the relations $=$ (equality of nodes), $\pi$ (parent-child relationship) and $\in$ (inclusion in one of the subsets $S_{1}, \ldots, S_{n}$). Often, it is more easily visualizable if we identify the subsets $S_{1}, \ldots, S_{n}$ with colours, i.e.\ we partition the set of all rooted trees into $n$ colour classes. A classical example would be the infiniteness of the tree, which is expressible as follows:
\begin{equation}\label{infiniteness EMSO}
\exists \ S \Big[\big[\phi \in S\big] \wedge \big[\forall \ u \in S \left[\exists \ v \in S [\pi(v) = u]\right]\big]\Big].
\end{equation}
In words, this asserts that there exists a set $S$ of nodes containing the root, such that every element $u$ of $S$ has a child $v$ in $S$. As mentioned earlier, this paper is concerned with showing that the complementary event i.e.\ that the tree dies out, is not expressible as an EMSO almost surely under any probability measure $\mu$ on $\mathcal{T}$ that satisfies the hypotheses of Theorem~\ref{main}. Our proof technique relies on a suitable version of the well-known Ehrenfeucht games, the set-pebble Ehrenfeucht games (see Definition~\ref{set-pebble EHR}). 

\section{Rooted colourings, set-pebble Ehrenfeucht and types games}\label{prelim}
In this paper, we fix an arbitrary positive integer $r$ and consider a set $\Sigma = \{\col_{0}, \ldots, \col_{r}\}$ of $r+1$ colours. Later on, we shall consider a set $\overline{\Sigma}$ of ``augmented" colours that is derived from $\Sigma$. 
\begin{defn}[Rooted colouring]\label{rooted colouring}
Given the set $\Sigma$ of colours, and a tree $T \in \mathcal{T}$, we call a colouring $\sigma: V(T) \rightarrow \Sigma$ a \emph{$(\Sigma, \col_{0})$-rooted colouring} of $T$ if $\sigma(v) = \col_{0} \Leftrightarrow v = \phi$, for all $v \in V(T)$.
\end{defn}
We insist upon assigning a unique colour to the root because it is a constant symbol in our language. Given a $T \in \mathcal{T}$ and a $(\Sigma, \col_{0})$-rooted colouring $\sigma$ of $T$, we shall talk of the pair $(T, \sigma)$ in the subsequent exposition, and call it a \emph{$(\Sigma, \col_{0})$-coloured tree}.

\par As previously mentioned, the set-pebble Ehrenfeucht game, defined below, will be our main tool in proving that indeed, there is a set of trees of positive measure on which finiteness is not an EMSO. A generalized version of this game is described in Definition~7.17 of \cite{libkin}, where it is referred to as the \emph{Fagin game}. 

\par In all games considered in this paper, when we say that a particular player wins, we mean with optimal play by both the players. In other words, the player who wins has a strategy that guarantees a win regardless of the other player's moves.

\begin{defn}[The set-pebble Ehrenfeucht game]\label{set-pebble EHR}
This game is played between two players, Spoiler and Duplicator. They are given two trees $T_{1}, T_{2}$ with roots $\phi_{1}, \phi_{2}$, and a positive integer $k$. The game consists of $k+1$ rounds, and each round consists of a move by Spoiler and a subsequent move by Duplicator. These rounds can be divided into the following:
\begin{enumerate}
\item \label{set round} \textbf{Set round:} In an initial round, called the \emph{set round}, Spoiler assigns a $(\Sigma, \col_{0})$-rooted colouring $\sigma_{1}$ to $T_{1}$. In reply, Duplicator assigns a $(\Sigma, \col_{0})$-rooted colouring $\sigma_{2}$ to $T_{2}$.
\item \label{pebble rounds} \textbf{Pebble rounds:} The subsequent rounds, numbered $1$ through $k$, are called \emph{pebble rounds}. In each of these rounds, Spoiler chooses \emph{either} of the two trees $T_{1}$ and $T_{2}$ and selects a node from that tree. In reply, Duplicator has to select a node from the other tree. 
\end{enumerate}
Suppose $x_{i}$ is the node selected from $T_{1}$, and $y_{i}$ the node selected from $T_{2}$, in round $i$, for $1 \leq i \leq k$. By convention, we set
\begin{equation}\label{convention}
x_{0} = \phi_{1} \text{ and } y_{0} = \phi_{2}.
\end{equation}
Then Duplicator wins this game, denoted $\EHR\left[T_{1}, T_{2}, \Sigma, k\right]$, if \emph{all} of the following conditions are satisfied: for all $i, j \in [k]$ (with convention \eqref{convention}),
\begin{enumerate}[label={(EHR \arabic*)},leftmargin=*]
\item \label{EHR 1} $\pi(x_{j}) = x_{i} \Leftrightarrow \pi(y_{j}) = y_{i}$;
\item \label{EHR 2} $\sigma_{1}(x_{i}) = \sigma_{2}(y_{i})$;
\item \label{EHR 3} $x_{i} = x_{j} \Leftrightarrow y_{i} = y_{j}$.
\end{enumerate}
\end{defn}

The following theorem leads to the conclusion of the main result, Theorem~\ref{main}. 
\begin{theorem}\label{main 1}
Fix positive integers $k$ and $r$, and the set of $r$ colours $\Sigma$. Then there exist a finite tree $T_{1}$, and a family $\mathcal{T}_{2}$ of infinite trees, such that $\mu\left[\mathcal{T}_{2}\right] > 0$ and Duplicator wins $\EHR[T_{1}, T_{2}, \Sigma, k]$ for each $T_{2} \in \mathcal{T}_{2}$.
\end{theorem}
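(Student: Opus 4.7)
The strategy is the classical types analysis for Ehrenfeucht–Fra\"iss\'e games on coloured rooted trees. I first define recursively, for each $j \in \{0, \ldots, k\}$, the \emph{$j$-type} $\tau_j(T, \sigma)$ of a $(\Sigma, \col_0)$-coloured rooted tree, taking values in a finite set $\Types(j, \Sigma)$: set $\tau_0(T, \sigma) := \sigma(\phi)$, and for $j \geq 1$ let $\tau_j(T, \sigma)$ record $\sigma(\phi)$ together with, for each $\rho \in \Types(j-1, \Sigma)$, the number of principal branches of $T$ whose $(j-1)$-type equals $\rho$, truncated at a threshold depending on $k$ (any count above the threshold being collapsed into ``at least that many''). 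A back-and-forth induction on $j$, which is the content of a separate ``types game'' lemma, then shows that $\tau_j(T_1, \sigma_1) = \tau_j(T_2, \sigma_2)$ implies Duplicator wins the $j$-pebble EHR game between the two coloured trees (without a set round): when Spoiler pebbles a node in a principal branch of $(j-1)$-type $\rho$, Duplicator replies in a principal branch of the same $(j-1)$-type, and the threshold is chosen so that enough matching branches are available in the other tree regardless of Spoiler's moves. Hence to prove the theorem, it suffices to exhibit $T_1$ finite and a positive-$\mu$-measure family $\mathcal{T}_2$ of infinite trees such that, for every rooted colouring $\sigma_1$ of $T_1$, Duplicator can find a colouring $\sigma_2$ of any $T_2 \in \mathcal{T}_2$ with $\tau_k(T_1, \sigma_1) = \tau_k(T_2, \sigma_2)$.

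The construction exploits the observation that $\tau_k(T, \sigma)$ depends only on the truncation $T|_k$ and $\sigma|_{T|_k}$, since the recursion terminates at $\tau_0$-values of depth-$k$ nodes, which are just colours. Accordingly, I choose a finite tree $T^*$ of depth $k$ and set $T_1 := T^*$ and $\mathcal{T}_2 := \{T_2 : T_2|_k = T^* \text{ and } T_2 \text{ is infinite}\}$. For positivity of $\mu(\mathcal{T}_2)$, note that the set of infinite trees decomposes as the disjoint union, over depth-$k$ finite trees $T'$, of the cylinders $\{T : T|_k = T'\} \cap \{T \text{ infinite}\}$; since $\mu$ gives positive mass to infinite trees (the non-vacuous regime of the theorem, as in the supercritical Galton–Watson examples discussed after Theorem~\ref{main}), at least one $T^*$ yields $\mu(\mathcal{T}_2) > 0$. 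Given any $\sigma_1$ on $T_1$ and any $T_2 \in \mathcal{T}_2$, Duplicator lets $\sigma_2$ agree with $\sigma_1$ on $T_2|_k = T_1$ and take arbitrary non-$\col_0$ values on deeper nodes; truncation invariance then gives $\tau_k(T_2, \sigma_2) = \tau_k(T_1, \sigma_1)$, so Duplicator wins by the types lemma.

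The principal technical obstacle is the types analysis itself: defining the multiplicity-capped $j$-types with a threshold correctly reflecting Spoiler's pebble budget, and proving the type-matching lemma by back-and-forth induction on $j$. The combinatorial construction of $T_1$ and $\mathcal{T}_2$, and the positive-measure argument, then follow relatively directly from (i) the cylinder-positivity hypothesis on $\mu$, (ii) the (implicit) assumption that $\mu$ places positive mass on infinite trees, and (iii) the truncation invariance of $\tau_k$.
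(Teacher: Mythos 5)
Your argument hinges on a ``types game'' lemma asserting that if the depth-$k$ root types $\tau_k(T_1,\sigma_1)$ and $\tau_k(T_2,\sigma_2)$ agree, then Duplicator wins the $k$-round pebble game on the two coloured trees. That lemma is false, and the gap is structural, not cosmetic. As you yourself observe, $\tau_k$ is determined by the truncation $T|_k$ together with the colouring restricted to it; but the $k$-round pebble game is not a depth-$k$-local invariant. Concretely, let $T_1$ be a rooted path $\phi_1\to w_1\to\cdots\to w_k$ and $T_2$ a rooted path $\phi_2\to z_1\to\cdots\to z_{k+1}$, both coloured $\col_0$ at the root and $\col_1$ everywhere else. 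Then $T_1|_k = T_2|_k$, so $\tau_k$ agrees. Yet Spoiler wins $\EHR[T_1,T_2,\Sigma,k]$: he pebbles $z_{k+1}$; Duplicator must answer with $w_k$, since answering $\phi_1$ violates \ref{EHR 3} and answering $w_i$ with $i<k$ loses next round when Spoiler pebbles $w_{i+1}$ (forcing a child of the leaf $z_{k+1}$, which does not exist). Spoiler then walks up the chain, pebbling $z_k,z_{k-1},\ldots,z_2$ in rounds $2,\ldots,k$; the parent–child condition \ref{EHR 1} forces Duplicator to answer $w_{k-1},\ldots,w_1$. After round $k$, $x_k=w_1$ is a child of $x_0=\phi_1$ while $y_k=z_2$ is not a child of $y_0=\phi_2$, violating \ref{EHR 1}. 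This is exactly where your back-and-forth induction ``within principal branches'' breaks down: the chain of forced parent moves reaches the root of one branch before the other, and the induction has no way to continue without contradicting the designated pair $(x_0,y_0)$. Choosing $\sigma_2$ to copy $\sigma_1$ on $T_2|_k$ and act arbitrarily below is therefore not a winning move, because the deeper nodes acquire types that Spoiler can reach and exploit.

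The paper's resolution of precisely this difficulty accounts for most of its technical work, and is genuinely different in kind. Its types game (Definition~\ref{type game}) matches, up to a cutoff $k$, the number of nodes \emph{of each type anywhere in the tree}---not the root type---and the type depth $m=M$ used is $3^{k+2}$, far larger than $k$. Even that is not enough by itself: passing from the types game to the set-pebble game requires an \emph{enhanced colouring} encoding depth exactly for small depths and modulo $D$ thereafter (Definition~\ref{en col def}), a distance-preserving game (DEHR) to control play inside subtrees, the hypothesis that both trees begin with a long bare path of length $D_0/2$, and the careful bookkeeping of ``close'' and ``threatening'' pairs in Theorem~\ref{main 2}. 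The construction of $T_1$ and $\mathcal{T}_2$ is correspondingly delicate, built from ``deficient'' sets of types and finite witness trees obtained by a compactness argument (Lemma~\ref{finite tree noncolourable}, Theorem~\ref{main 3}), so that the extra infinite branch of $T_2$ can be coloured without introducing any new type. As a secondary point, your positivity argument quietly assumes $\mu$ gives positive mass to the set of infinite trees; that is a reasonable standing assumption, but cylinder positivity alone does not imply it, so it is worth making explicit.
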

We prove Theorem~\ref{main} as a consequence of Theorem~\ref{main 1}, as follows.
\begin{proof}[Proof of Theorem \ref{main}]
For this, we refer to Proposition~7.18 of \cite{libkin}. This proposition gives a necessary and sufficient condition for the expressibility of a given property $P$, defined on a general set of structures, as an EMSO sentence over such structures. It states that $P$ is not expressible as an EMSO if and only if, given any two positive integers $r, k$, there exist a structure $A$ that satisfies $P$, and a structure $B$ that does not, such that Duplicator wins the set-pebble Ehrenfeucht game, with parameters $r$ and $k$, on $A$ and $B$. The proposition does not involve any probability (i.e.\ only tautological expressibility considered). However, we show that Duplicator wins the set-pebble Ehrenfeucht game on trees $T_{1}, T_{2}$ that are as described in Theorem~\ref{main 1}. This shows that tautologically, finiteness is not expressible as an EMSO. Since we additionally show that this holds for every $T_{2} \in \mathcal{T}_{2}$, and $\mu\left[\mathcal{T}_{2}\right] > 0$, we can conclude that finiteness is not almost surely expressible as an EMSO under the measure $\mu$.
\end{proof}
For more general discussion on the connection between Ehrenfeucht games and EMSO properties, see also \cite{immerman} and \cite{marker}. 

\par In order to prove Theorem~\ref{main 1}, we introduce the \emph{types game} in Definition~\ref{type game}, which is \emph{harder} than the set-pebble Ehrenfeucht game in the following sense: if Duplicator is able to win the types game with certain parameters on given trees $T_{1}, T_{2}$, she also wins the set-pebble Ehremfeucht game on $T_{1}, T_{2}$ with related parameters (these are made precise in the sequel). Here and elsewhere, out terminology is biased in favour of Duplicator, since our ultimate goal is to prove that she can win. The proof of Theorem~\ref{main 1} happens in two steps. In Section~\ref{type to EHR}, we show that, given two trees $T_{1}, T_{2}$ which satisfy certain assumptions on the neighbourhoods of their roots, if Duplicator wins the types game with suitable parameters on $T_{1}, T_{2}$, then she also wins the set-pebble Ehrenfeucht game with related parameters. In Section~\ref{type win positive probab}, we show the construction of a finite tree $T_{1}$, and a family $\mathcal{T}_{2}$ of infinite trees, with $\mu[\mathcal{T}_{2}] > 0$, such that Duplicator is able to win the types game on $T_{1}$ and $T_{2}$, for every $T_{2} \in \mathcal{T}_{2}$. Combining the results of these two sections, we derive the final conclusion of Theorem~\ref{main 1}.

\par The definition of the types game requires us to first define the \emph{type} of a node. A type is defined in terms of $\Sigma$, a \emph{cut-off} $k \in \mathbb{N}$, and depth $m \in \mathbb{N}$. These same parameters are also required when we define the types game. 

\begin{defn}[Type]\label{type def}
Given a $(\Sigma, \col_{0})$-coloured tree $(T, \sigma)$, we define the $(\Sigma, m, k)$-type $\Type_{\Sigma,m,k,(T,\sigma)}(v) = \Type_m(v)$ of any $v \in V(T)$ recursively on $m$, as follows. The $(\Sigma, 0, k)$-type  $\Type_{0}(v)$ of $v$ with respect to $(T, \sigma)$ is simply the colour $\sigma(v)$ of $v$.
\par Suppose we have defined $\Type_{m-1}(v)$ for all $v \in V(T)$. Let $\Gamma_{m-1} = \Gamma_{\Sigma, m-1, k}$ denote the set of all possible $(\Sigma, m-1, k)$ types. For any $v \in V(T)$, if $n_{\gamma}$ denotes the number of children $u$ of $v$ with $\Type_{m-1}(u) = \gamma$, we let $\vec{n} = (n_{\gamma} \wedge k: \gamma \in \Gamma_{m-1})$. The $(\Sigma, m, k)$-type of $v$ with respect to $(T, \sigma)$ is then given by
\begin{equation}
\Type_{m}(v) = (\sigma(v), \vec{n}).
\end{equation}
\end{defn}
Note the use of the minimum with the cutoff $k$ in the definition of $\vec{n}$ above -- we count the number of children of each type only up to the cutoff $k$.

When the set of colours $\Sigma$, the tree $T$ and its colouring $\sigma$, and the parameters $m$ and $k$ are clear from the context, we shall call the $(\Sigma, m, k)$-type of a node $v$ in $V(T)$ its ``depth $m$ type" or simply ``type". If there are types of varying depths being considered, we shall denote the set of all possible depth $m$ types by the shortened notation $\Gamma_{m}$ as opposed to $\Gamma_{\Sigma, m, k}$; if the depth is also clear from the context, we shall simply refer to it as $\Gamma$. When there is no chance of confusion, we shall also drop the subscripts from $\Type_{\Sigma, m, k, (T, \sigma)}(v)$ and denote the type of $v$ as $\Type(v)$.  
 
We now describe the types game. This is a single round game played between Spoiler and Duplicator. 
\begin{defn}[Types game]\label{type game}
Fix $\Sigma, m, k$. The players are given two trees $T_{1}$ and $T_{2}$. Spoiler assigns a $(\Sigma, \col_{0})$-rooted colouring $\sigma_{1}$ to $T_{1}$. In reply, Duplicator assigns a $(\Sigma, \col_{0})$-rooted colouring $\sigma_{2}$ to $T_{2}$. For each type $\gamma \in \Gamma_{\Sigma, m, k}$, let $n_{\gamma}^{(1)}$ and $n_{\gamma}^{(2)}$ denote the numbers of nodes of type $\gamma$ in $(T_{1}, \sigma_{1})$ and $(T_{2}, \sigma_{2})$ respectively. Duplicator wins this game denoted by $\Types[T_{1}, T_{2}, \Sigma, k, m]$, if 
\begin{equation}\label{win type}
n_{\gamma}^{(1)} \wedge k = n_{\gamma}^{(2)} \wedge k \quad \text{for all } \gamma \in \Gamma.
\end{equation}
\end{defn}

\section{Enhanced colouring and distance preserving Ehrenfeucht games}\label{type to EHR prelim}
\par One of the two main steps in the proof of Theorem \ref{main 1} will be to show that if Duplicator wins the types game with certain parameters on two given trees $T_{1}, T_{2}$, she wins the set-pebble Ehrenfeucht game with related parameters on those trees. This is stated and proved in Theorem~\ref{main 2} in Section~\ref{type to EHR}. In this section, we have several different tools to introduce and discuss, so as to use them in the proof of Theorem~\ref{main 2}. These are done in separate subsections. In Subsection~\ref{enhanced colouring subsec}, we introduce the notion of \emph{enhanced colouring}, which lets Duplicator attach an extra label to each node of a $(\Sigma, \col_{0})$-coloured tree. This additional information helps her choose a judicious colouring in the set round. In Subsection~\ref{DEHR subsec}, we introduce yet another version of the Ehrenfeucht games, called the distance preserving Ehrenfeucht game (DEHR). The DEHR is different from the set-pebble Ehrenfeucht game in that it does not involve a set round, as the trees provided are already coloured. Moreover, Duplicator needs to maintain the graph distances between corresponding pairs of chosen nodes on the two trees (i.e.\ the language now also involves the relation $\rho$, which is the shortest edge distance between two nodes in a graph). 

\subsection{Enhanced colouring:}\label{enhanced colouring subsec}
Given a colouring of the nodes of a tree $T$ using the colours in $\Sigma$, we discuss here a method of ``refining" this colouring by attaching a second marker to each node. This \emph{enhanced colouring} will be of use to Duplicator in her winning strategy for Theorem~\ref{main 2}. 
\begin{defn}\label{en col def}
Fix positive integers $D, D_{0}$, with $D$ even, and $D_{0}+1$ many new colours $\col'_{0}, \ldots, \col'_{D_{0}}$ which are different from all the colours of $\Sigma$. Suppose we are given a $(\Sigma, \col_{0})$-coloured tree $(T, \sigma)$. Let 
\begin{equation}\label{en col set}
\overline{\Sigma} = \Big\{\big(\col_{i}, \col'_{j}\big): i \in [r], j \in [D_{0}]\Big\} \cup \Big\{\big(\col_{i}, j\big): i \in [r], -D/2 < j \leq D/2\Big\}.
\end{equation}
Further define the function $F: \mathbb{N} \cup \{0\} \rightarrow \{\col'_{0}, \ldots, \col'_{D_{0}}\} \cup \{-D/2+1, \ldots, D/2\}$ as
\begin{equation} \label{depth fn}
F(i) = 
  \begin{cases} 
   \col'_{i} & \text{if } 0 \leq i \leq D_{0}, \\
   i \bmod D & \text{if } i > D_{0},
  \end{cases}
\end{equation}
where, here and subsequently, $i \bmod D$ denotes unique element of $\{-D/2+1, \ldots, D/2\}$ that is congruent to $i$ modulo $D$. Then we define the \emph{enhanced colouring corresponding to $\sigma$, with respect to the parameters $\col'_{0}, \ldots, \col'_{D_{0}}, D$}, to be the map $\overline{\sigma}: V(T) \rightarrow \overline{\Sigma}$ given by:
\begin{equation}\label{en col}
\overline{\sigma}(v) = \Big(\sigma(v), F\big(d(v)\big)\Big), \quad \text{for all } v \in V(T),
\end{equation}
where recall that $d(v)$ is the depth of $v$ in $T$. 
\end{defn}
We call the elements of $\overline{\Sigma}$ \emph{augmented colours}. Henceforth, we do not mention the parameters $D$, $\col'_{0}$, $\ldots$, $\col'_{D_{0}}$, except to state the specific values of $D_{0}$ and $D$ in the statement of the results where they are required. We define a colouring $\sigma': V(T) \rightarrow \overline{\Sigma}$ to be \emph{legal} if there exists a $(\Sigma, \col_{0})$-rooted colouring $\sigma$ such that $\sigma' = \overline{\sigma}$. Clearly, a legal colouring is also a $\big(\overline{\Sigma}, (\col_{0}, \col'_{0})\big)$-rooted colouring.

\begin{lemma}\label{legal}
Suppose $T_{1}, T_{2}$ are two given trees with roots $\phi_{1}, \phi_{2}$, such that Duplicator wins $\Types[T_{1}, T_{2}, \overline{\Sigma}, k, m]$ for some positive integers $k, m$. For any legal colouring $\sigma_{1}$ that Spoiler assigns to $T_{1}$, the winning reply of Duplicator must be a legal colouring $\sigma_{2}$ of $T_{2}$.
\end{lemma}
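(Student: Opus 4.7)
The plan is to verify the two conditions for legality of $\sigma_{2}$ directly. Writing $\sigma_{2}(v) = (\alpha(v), \beta(v))$ for $v \in V(T_{2})$, we must show (L1) $\alpha(v) = \col_{0}$ if and only if $v = \phi_{2}$, and (L2) $\beta(v) = F(d(v))$ for every $v$. Throughout, the key input is that in the types game played with colour set $\overline{\Sigma}$, Duplicator's response $\sigma_{2}$ is a $(\overline{\Sigma}, (\col_{0}, \col'_{0}))$-rooted colouring, and that by Definition~\ref{type def} the depth-$m$ type $\Type_{m}(v)$ encodes the augmented colour $\sigma(v)$ as its first coordinate together with the multiset, up to cutoff $k$, of children's depth-$(m-1)$ types as its second.

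For (L1), observe that for any $v \in V(T_{2})$ Duplicator's winning condition forces $\Type_{m}(v)$ to be realised in $T_{1}$, so some $v' \in V(T_{1})$ satisfies $\sigma_{1}(v') = \sigma_{2}(v)$. By legality of $\sigma_{1}$, the only augmented colour of the form $(\col_{0}, x)$ realised in $T_{1}$ is $(\col_{0}, \col'_{0})$, attained uniquely at $\phi_{1}$. Hence if $\alpha(v) = \col_{0}$ then $\sigma_{2}(v) = (\col_{0}, \col'_{0})$, which by the rooted property of $\sigma_{2}$ forces $v = \phi_{2}$.

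I prove (L2) by induction on $d = d(v)$. The base $d = 0$ is immediate since $\sigma_{2}(\phi_{2}) = (\col_{0}, \col'_{0})$ gives $\beta(\phi_{2}) = \col'_{0} = F(0)$. For the inductive step, fix $v$ at depth $d \geq 1$, let $u = \pi(v)$, and assume $\beta(u) = F(d-1)$. Duplicator's win yields some $u'' \in V(T_{1})$ with $\Type_{m}(u'') = \Type_{m}(u)$; equality of first coordinates gives $\sigma_{1}(u'') = \sigma_{2}(u)$, so legality of $\sigma_{1}$ forces $F(d(u'')) = F(d-1)$. Equality of second coordinates gives matching multisets, up to cutoff $k$, of children's $(m-1)$-types, so $v$'s $(m-1)$-type is realised by some child $v''$ of $u''$ in $T_{1}$. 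Since the $(m-1)$-type still encodes the augmented colour, $\sigma_{2}(v) = \sigma_{1}(v'')$, and thus $\beta(v) = F(d(v'')) = F(d(u'') + 1)$ by legality of $\sigma_{1}$.

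It remains to verify that $F(d(u'')) = F(d-1)$ implies $F(d(u'') + 1) = F(d)$. The range of $F$ partitions into two disjoint subsets of $\overline{\Sigma}$, namely the colour symbols $\col'_{0}, \ldots, \col'_{D_{0}}$ produced on inputs $\{0, \ldots, D_{0}\}$ and the integers in $\{-D/2+1, \ldots, D/2\}$ produced on inputs exceeding $D_{0}$. Thus $F(d(u'')) = F(d-1)$ forces either both $d(u''), d-1 \in \{0, \ldots, D_{0}\}$, in which case the injectivity of $F$ on this range yields $d(u'') = d - 1$ and hence the desired equality after incrementing; or both $d(u''), d-1 > D_{0}$, in which case $d(u'') \equiv d - 1 \pmod D$ while both $d(u'') + 1, d > D_{0}$, giving $F(d(u'') + 1) = (d(u'') + 1) \bmod D = d \bmod D = F(d)$. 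The main subtlety here is precisely this boundary analysis at depth $D_{0}$, but it is handled cleanly by the above dichotomy, completing the induction and establishing legality of $\sigma_{2}$.
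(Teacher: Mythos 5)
Your proof is correct and follows essentially the same line as the paper: both propagate the second coordinate of the augmented colour from parent to child using the type-matching winning condition \eqref{win type} together with the legality of $\sigma_{1}$, the paper phrasing this as a minimal-counterexample contradiction and you as a forward induction on depth, which are equivalent. Your explicit dichotomy establishing that $F(d(u'')) = F(d-1)$ forces $F(d(u'')+1) = F(d)$ fills in a boundary detail at depth $D_{0}$ that the paper uses implicitly without spelling it out, which is a welcome clarification.
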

\begin{proof}
As Spoiler assigns legal colouring $\sigma_{1}$, hence $\sigma_{1}\left(\phi_{1}\right) = (\col_{0}, \col'_{0})$. Recall from Definition~\ref{type game} that Duplicator needs to assign a $\big(\overline{\Sigma}, (\col_{0}, \col'_{0})\big)$-rooted colouring to $T_{2}$, hence we must have $\sigma_{2}\left(\phi_{2}\right) = (\col_{0}, \col'_{0})$. Suppose now the claim of Lemma~\ref{legal} were false. We can then find a smallest positive integer $s$, and a node $v \in V(T_{2})$ with $d(v) = s$, such that $\sigma_{2}(v) = (\col_{i}, j)$ with $j \neq F(s)$, for some $1 \leq i \leq r$. The parent $u$ of $v$ will satisfy $\sigma_{2}(u) = (\col_{i'}, F(s-1))$, for some $0 \leq i' \leq r$ ($i' = 0$ iff $u = \phi_{2}$). 
\par Since $\sigma_{1}$ is legal, for any $w \in V(T_{1})$ with $\sigma_{1}(w) = \big(\col_{i'}, F(s-1)\big)$, we must have $F(d(w)) = F(s-1)$; and if $w$ has any child $w'$, then the second coordinate of $\sigma_{1}(w')$ will be $F(s)$. Hence the depth $1$ type of $u$ in $(T_{2}, \sigma_{2})$ does not agree with that of any node in $(T_{1}, \sigma_{1})$, hence nor does its depth $m$ type for any positive integer $m$. But this means, by \eqref{win type}, that $\sigma_{2}$ is not a winning move for Duplicator. Thus we have arrived at a contradiction.
\end{proof}

\subsection{The distance preserving Ehrenfeucht game (DEHR), and its relation to the types game}\label{DEHR subsec}
The DEHR game is a tool that is used by Duplicator to find a winning strategy in Theorem~\ref{main 1}. It is used as a local tool, in order to find her responses in small subtrees inside the given trees. This is made precise in the proof of Theorem~\ref{main 1} in Section~\ref{type to EHR}. To define the DEHR, we first fix a positive integer $k$ and the set of colours $\Sigma$. 

\begin{defn}[DEHR]\label{DEHR}
We are given $(\Sigma, \col_{0})$-coloured trees $(T_{1}, \sigma_{1})$ and $(T_{2}, \sigma_{2})$, with roots $\phi_{1}, \phi_{2}$, and pairs $(x_{i}, y_{i}) \in V(T_{1}) \times V(T_{2})$, $1 \leq i \leq \ell$, for some $0 \leq \ell \leq k$. We may think of these pairs as earlier moves already played. There are $k - \ell$ rounds in the game. In particular, when $\ell = 0$, we are given no such pair of nodes, and when $\ell = k$, there are no rounds to be played in the game. When $\ell = 0$, we denote the game by $\DEHR\big[(T_{1}, \sigma_{1}), (T_{2}, \sigma_{2}), \Sigma, k\big]$, otherwise we denote it by $\DEHR\big[(T_{1}, \sigma_{1}), (T_{2}, \sigma_{2}), \Sigma, k, \left\{(x_{i}, y_{i}): 1 \leq i \leq \ell\right\}\big]$.
\par In each of the $k-\ell$ rounds, Spoiler picks a node from \emph{either} of the two trees, and in reply, Duplicator picks a node from the other tree. Let $x_{j+\ell}$ be the node selected from $T_{1}$ and $y_{j+\ell}$ that from $T_{2}$ in round $j$, for $1 \leq j \leq k-\ell$. We again follow the convention given in \eqref{convention}, i.e.\ set $x_{0}, y_{0}$ to be the respective roots. Duplicator wins this game if \emph{all} of the following conditions are satisfied: for all $i, j \in [k]$ (with convention \eqref{convention}),
\begin{enumerate}[label={(DEHR \arabic*)},leftmargin=*]
\item \label{DEHR win 1} $\rho(x_{i}, x_{j}) = \rho(y_{i}, y_{j})$ where $\rho$ is the usual graph distance;
\item \label{DEHR win 2} $\pi(x_{j}) = x_{i} \Leftrightarrow \pi(y_{j}) = y_{i}$;
\item \label{DEHR win 3} $\sigma_{1}(x_{i}) = \sigma_{2}(y_{i})$;
\item \label{DEHR win 4} $x_{i} = x_{j} \Leftrightarrow y_{i} = y_{j}$.
\end{enumerate}
\end{defn}
When $\ell > 0$, the given pairs $(x_{i}, y_{i}), 1 \leq i \leq \ell$, are often referred to as \emph{designated pairs} in the game. Observe that it only makes sense to consider designated pairs that themselves satisfy Conditions \ref{DEHR win 1} through \ref{DEHR win 4}, else Duplicator has no chance of winning the game. 

\par For $1 \leq \ell \leq k$, a collection of designated pairs $\big\{(x_{i}, y_{i}): 1 \leq i \leq \ell\big\}$ is called a \emph{winnable} configuration for $\big\{(T_{1}, \sigma_{1}), (T_{2}, \sigma_{2}), \Sigma, k\big\}$ if Duplicator wins $\DEHR\big[(T_{1}, \sigma_{1}), (T_{2}, \sigma_{2}), \Sigma, k, \left\{(x_{i}, y_{i}): 1 \leq i \leq \ell\right\}\big]$ (with optimal play by both players, as usual). When $\ell = k$, we often call such a configuration simply \emph{winning}.

\begin{lemma}\label{obs imp later}
Suppose we are given $(\Sigma, \col_{0})$-coloured trees $(T_{1}, \sigma_{1})$ and $(T_{2}, \sigma_{2})$, positive integer $k$, and a winnable configuration $\left\{(x_{i}, y_{i}): 1 \leq i \leq \ell\right\}$ for $\big\{(T_{1}, \sigma_{1}), (T_{2}, \sigma_{2}), \Sigma, k\big\}$ for some $1 \leq \ell \leq k$. Then all $i, j \in [\ell]$ satisfy Conditions \ref{DEHR win 1} through \ref{DEHR win 4}.
\end{lemma}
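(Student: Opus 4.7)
The plan is to observe that the designated pairs $(x_i, y_i)$ for $1 \leq i \leq \ell$, together with the root pair $(x_0, y_0) = (\phi_1, \phi_2)$ supplied by convention \eqref{convention}, are fixed at the outset of the game and cannot be altered by any move played in the $k - \ell$ subsequent rounds. Consequently whether conditions \ref{DEHR win 1}--\ref{DEHR win 4} hold among indices $i, j \in [\ell]$ is determined entirely by the input data, before Duplicator ever responds. If any such condition failed, Duplicator would lose the game regardless of her strategy, contradicting the assumption that the configuration is winnable.

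Concretely, I would unwind the definition of \emph{winnable}: there exists a strategy for Duplicator guaranteeing that, for every sequence of $k - \ell$ Spoiler moves, the resulting full configuration $\{(x_i, y_i) : 0 \leq i \leq k\}$ satisfies \ref{DEHR win 1}--\ref{DEHR win 4} for all $i, j \in [k]$. Restricting the quantifier to the subrange $[\ell] \subseteq [k]$ gives these four conditions on the designated pairs at the end of play. But the pairs indexed by $[\ell]$ never changed during play, so the same conditions already hold at the start. The edge case $\ell = k$ is immediate: there are no rounds, and Duplicator's victory is by definition the statement that \ref{DEHR win 1}--\ref{DEHR win 4} hold on the designated pairs.

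I do not anticipate any substantive obstacle; the lemma is essentially a coherence check on the notion of \emph{winnable configuration}, recording that only configurations whose designated pairs are already mutually compatible can possibly be winnable. I expect this fact to be used freely in the proof of Theorem~\ref{main 2} so that, whenever a subgame is spawned locally on a pair of subtrees with a preassigned matching of nodes, one can invoke winnability without stopping to reverify compatibility of the designated pairs.
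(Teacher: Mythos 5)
Your proposal is correct and takes essentially the same approach as the paper: both observe that a winnable configuration means Duplicator wins the full DEHR game with those pairs played as the initial $\ell$ rounds, so the winning conditions (which are required on all indices in $[k]$) must in particular hold on $[\ell]$, and since the designated pairs are fixed throughout play they satisfy the conditions from the outset. Your treatment is slightly more explicit (including the $\ell = k$ edge case), but there is no substantive difference.
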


\begin{proof}
When $\left\{(x_{i}, y_{i}): 1 \leq i \leq \ell\right\}$ is winnable for $\big\{(T_{1}, \sigma_{1}), (T_{2}, \sigma_{2}), \Sigma, k\big\}$, Duplicator wins $\DEHR\big[(T_{1}, \sigma_{1}), (T_{2}, \sigma_{2}), \Sigma, k\big]$ where the first $\ell$ rounds were $(x_{i}, y_{i}), 1 \leq i \leq \ell$, and the $0$-th round was $(x_{0}, y_{0}) = (\phi_{1}, \phi_{2})$. Hence they have to satisfy Duplicator's winning conditions for DEHR, i.e.\ Conditions \ref{DEHR win 1} through \ref{DEHR win 4}.
\end{proof}

\begin{defn}\label{cor node}
Suppose the $(\Sigma, \col_{0})$-coloured trees $(T_{1}, \sigma_{1})$ and $(T_{2}, \sigma_{2})$ are such that Duplicator wins $\DEHR\big[(T_{1}, \sigma_{1}), (T_{2}, \sigma_{2}), \Sigma, k\big]$. For any $u \in V(T_{1})$, we define $v \in V(T_{2})$ to be a \emph{corresponding node} to $u$ if, for $x_{1} = u$ and $y_{1} = v$, the configuration $\{(x_{1}, y_{1})\}$ is winnable for $\big\{(T_{1}, \sigma_{1}), (T_{2}, \sigma_{2}), \Sigma, k\big\}$. Symmetrically, we can define a corresponding node in $V(T_{1})$ to any $v \in V(T_{2})$. 
\par Suppose we are given $(T_{1}, \sigma_{1})$, $(T_{2}, \sigma_{2})$ and a winnable configuration $\big\{(x_{i}, y_{i}): 1 \leq i \leq \ell\big\}$ for $\big\{(T_{1}, \sigma_{1}), (T_{2}, \sigma_{2}), \Sigma, k\big\}$, for some $1 \leq \ell \leq k-1$. For any $u \in V(T_{1})$, we define $v \in V(T_{2})$ to be a \emph{corresponding node} to $u$ if the configuration $\big\{(x_{i}, y_{i}), 1 \leq i \leq \ell+1\big\}$ with $x_{\ell+1} = u$ and $y_{\ell+1} = v$, is winnable for $\big\{(T_{1}, \sigma_{1}), (T_{2}, \sigma_{2}), \Sigma, k\big\}$. Symmetrically, we can define a corresponding node in $V(T_{1})$ to any $v \in V(T_{2})$. 
\end{defn}

\par Note that the choice of a corresponding node need not be unique. But if $\{(x_{i}, y_{i}): 1 \leq i \leq \ell\}$ is winnable for $\DEHR\big[(T_{1}, \sigma_{1}), (T_{2}, \sigma_{2}), \Sigma, k\big]$ \Big(or in the case of $\ell = 0$, Duplicator wins $\DEHR\big[(T_{1}, \sigma_{1}), (T_{2}, \sigma_{2}), \Sigma, k\big]$\Big), every node in $V(T_{1})$ \big(correspondingly $V(T_{2})$\big) will have at least one corresponding node in the other tree. 

\par The following lemma establishes a crucial connection between the types game and the DEHR. This connection will be utilized in devising a winning strategy for Duplicator in the proof of Theorem~\ref{main 2}.
\begin{lemma}\label{same type win DEHR}
Let $(T_{1}, \sigma_{1})$ and $(T_{2}, \sigma_{2})$ be two given $(\Sigma, \col_{0})$-coloured trees. If nodes $u_{1} \in V(T_{1})$ and $u_{2} \in V(T_{2})$ have the same $(\Sigma, m k)$-type, then Duplicator wins $$\DEHR\big[\left(T_{1}(u_{1})|_{m}, \sigma_{1}\right), \left(T_{2}(u_{2})|_{m}, \sigma_{2}\right), \Sigma, k\big].$$
\end{lemma}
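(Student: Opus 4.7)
The plan is to prove the lemma by induction on $m$, with $k$ fixed throughout. For the base case $m=0$, each truncated subtree $T_i(u_i)|_0$ is the singleton $\{u_i\}$, and equality of $(\Sigma, 0, k)$-types forces $\sigma_1(u_1) = \sigma_2(u_2)$. The only legal selection in each tree is the root itself, so every round produces $(x_j, y_j) = (u_1, u_2)$, and Conditions \ref{DEHR win 1}--\ref{DEHR win 4} hold trivially.

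For the inductive step, I would fix $m \geq 1$ and assume the result for $m-1$. Since $u_1, u_2$ share the same $(\Sigma, m, k)$-type, their colors agree and, for every $\gamma \in \Gamma_{m-1}$, the capped counts $n_\gamma^{(i)} \wedge k$ of children of $u_i$ of type $\gamma$ coincide. Duplicator's strategy grows a partial matching $M$ between children of $u_1$ and children of $u_2$ whose matched pairs $(w_1, w_2)$ share a $(\Sigma, m-1, k)$-type, and runs, for each matched pair, the inductively supplied winning strategy for $\DEHR\big[(T_1(w_1)|_{m-1}, \sigma_1), (T_2(w_2)|_{m-1}, \sigma_2), \Sigma, k\big]$. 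When Spoiler picks a node $x$: if $x \in \{u_1, u_2\}$, Duplicator responds with the other root; otherwise $x$ lies in a unique subtree $T_i(w)|_{m-1}$ rooted at a child $w$ of $u_i$, and Duplicator either continues the existing sub-strategy attached to $w$ (if $w$ already appears in $M$) or else selects an unmatched child $w'$ of $u_{3-i}$ of the same $(\Sigma, m-1, k)$-type as $w$, adds the new pair to $M$, and initiates the corresponding sub-strategy with $(w, w')$ in the role of its $0$-th-round roots.

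Such a $w'$ is guaranteed to exist by a short pigeonhole argument on the common type $\gamma$ of $w$: either $n_\gamma^{(1)} = n_\gamma^{(2)} = n \leq k$, in which case the matched type-$\gamma$ children on the two sides always increase in lockstep and unmatched counts remain equal; or $n_\gamma^{(1)} \wedge k = n_\gamma^{(2)} \wedge k = k$, in which case both sides possess at least $k$ children of type $\gamma$ while at most $k-1$ matches of any kind have accumulated across the preceding $k-1$ rounds, leaving an unmatched type-$\gamma$ child available on each side.

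Verification of \ref{DEHR win 1}--\ref{DEHR win 4} then splits by cases on the pair of moves being compared. Moves lying in a single matched subtree pair are handled by the inductive sub-strategy, where the sub-game's $0$-th-round convention from Definition~\ref{DEHR} supplies the identification $w_1 \leftrightarrow w_2$. For moves in distinct matched subtrees, the ambient path is forced through $u_i$, so $\rho$ decomposes as the sum of in-subtree depths plus $2$, reducing \ref{DEHR win 1} to the depth-preservation already afforded by the sub-games; cross-subtree nodes are neither equal nor parent-child, settling \ref{DEHR win 2} and \ref{DEHR win 4}; and \ref{DEHR win 3} at the roots is immediate since the common $(\Sigma, m, k)$-type forces $\sigma_1(u_1) = \sigma_2(u_2)$. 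The main obstacle is the bookkeeping around $M$, particularly certifying that the pigeonhole never fails across the $k$ rounds---this is precisely the reason the cutoff $k$ appears in the definition of type in the first place, and it is what makes the argument go through uniformly in the number of children.
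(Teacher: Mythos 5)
Your proposal is correct and follows essentially the same route as the paper's proof: induction on $m$, a maintained matching between principal branches of equal $(\Sigma, m-1, k)$-type (which the paper phrases via ``$x$-clusters'' and ``$y$-clusters''), a pigeonhole argument using the cutoff $k$ and the bound on the number of rounds to guarantee an unmatched partner branch of the same type always exists, and delegation of play within matched branches to the inductively supplied DEHR strategies with the distance condition across branches recovered by decomposing $\rho$ through the common parent. No meaningful gap or divergence.
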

Here, $\left(T_{1}|_{m}, \sigma_{1}\right)$ \big(respectively $\left(T_{2}|_{m}, \sigma_{2}\right)$\big) denotes the truncated subtree $T_{1}(u_{1})|_{m}$ \big(respectively $T_{2}(u_{2})|_{m}$\big) along with the colouring $\sigma_{1}$ \big(respectively $\sigma_{2}$\big) restricted to this truncated subtree.
\par Note that, under the stronger assumption that $u_{1}$ and $u_{2}$ have the same $(\Sigma, m, \infty)$-types (so that counts are not truncated at any finite cutoff), the conclusion of the lemma for the DEHR with finitely many rounds would be obvious, since the two trees $\left(T_{1}(u_{1})|_{m}, \sigma_{1}\right)$ and $\left(T_{2}(u_{2})|_{m}, \sigma_{2}\right)$ would be isomorphic (where the isomorphism maintains both adjacency and colour of nodes).

\par The proof of Lemma~\ref{same type win DEHR} is rather technical, but the idea is straightforward. We use induction on $m$. Assuming the lemma holds for $m$, let $u_{1}, u_{2}$ have the same depth-$(m+1)$ types. Then, for any depth-$m$ type $\gamma$, the numbers of principal branches (recall definition from Subsection \ref{notation}) of type $\gamma$ in $T_{1}(u_{1})|_{m+1}$ and $T_{2}(u_{2})|_{m+1}$ are equal when counted up to the cutoff $k$. If the numbers are exactly equal in both trees, Duplicator simply plays in the $i$-th type-$\gamma$ principal branch in $T_{1}(u_{1})|_{m+1}$ when Spoiler plays in the $i$-th type-$\gamma$ principal branch in $T_{2}(u_{2})|_{m+1}$. She plays according to her winning strategy on two depth-$m$ trees, both of type $\gamma$. When there are at least $k$ type-$\gamma$ principal branch in each tree, Duplicator chooses from a ``new" type-$\gamma$ principal branch (i.e.\ one from which no nodes have been chosen so far) in $T_{1}(u_{1})|_{m+1}$ every time Spoiler does the same in $T_{2}(u_{2})|_{m+1}$, and vice versa. This works because the number of rounds in the game is $k$. Her selections again follow her winning strategy on two depth-$m$ trees of type $\gamma$.

\begin{proof}
We prove Lemma~\ref{same type win DEHR} via induction on $m$. Th case $m = 0$ is immediate. Suppose it holds for some $m \geq 0$. Let $u_{1} \in V(T_{1})$ and $u_{2} \in V(T_{2})$ have the same depth-$(m+1)$ types. For this proof, we abbreviate $\Gamma_{\Sigma, i, k}$ by $\Gamma_{i}$ for all $i$. For all $\gamma \in \Gamma_{m}$, let $n^{(1)}_{\gamma}$ and $n^{(2)}_{\gamma}$ be the numbers of children, with depth-$m$ type $\gamma$, of $u_{1}$ and $u_{2}$ respectively. From the definition of types, we have
\begin{equation}\label{same type count}
n^{(1)}_{\gamma} \wedge k = n^{(2)}_{\gamma} \wedge k \text{ for all } \gamma \in \Gamma_{m}.
\end{equation}
Moreover, $\sigma_{1}(u_{1}) = \sigma_{2}(u_{2})$. Recall from Subsection~\ref{notation} that a principal branch is a subtree originating from one of the children of the root. Let $S_{\gamma, i}$, for $1 \leq i \leq n^{(1)}_{\gamma}$, be the principal branches of $T_{1}\left(u_{1}\right)|_{m+1}$ with depth-$m$ type $\gamma$. Let $T_{\gamma, j}$, for $1 \leq j \leq n^{(2)}_{\gamma}$, be the principal branches of $T_{2}\left(u_{2}\right)|_{m+1}$ with depth-$m$ type $\gamma$. By the induction hypothesis, for all $\gamma \in \Gamma_{m}$,
\begin{equation}\label{ind hyp same type}
\text{Duplicator wins } \DEHR\Big[\left(S_{\gamma, i}, \sigma_{1}\right), \left(T_{\gamma, j}, \sigma_{2}\right), \Sigma, k\Big], 
\end{equation}
for all $1 \leq i \leq n^{(1)}_{\gamma}$ and $1 \leq j \leq n^{(2)}_{\gamma}$. As in convention \eqref{convention}, set the $0$-th pair to be the roots, i.e.\ $x_{0} = u_{1}$ and $y_{0} = u_{2}$. Suppose $s$ rounds of the game have been played. Duplicator maintains the following conditions on the configuration is $\{(x_{i}, y_{i}): 1 \leq i \leq s\}$: 
\begin{enumerate}[label={(A\arabic*)},leftmargin=*]
\item \label{same type cond 1} $x_{i} = u_{1} \Leftrightarrow y_{i} = u_{2}$.

\item \label{same type cond 3} For $1 \leq i_{1} < \cdots < i_{r} \leq s$, call $\big\{x_{i_{1}}, \ldots, x_{i_{r}}\big\}$ an \emph{$x$-cluster} up to round $s$, if they belong to a common principal branch, and no other $x_{j}$ selected so far belongs to it. We analogously define a \emph{$y$-cluster}, up to round $s$. Then $\big\{x_{i_{1}}, \ldots, x_{i_{r}}\big\}$ is an $x$-cluster iff $\big\{y_{i_{1}}, \ldots, y_{i_{r}}\big\}$ is a $y$-cluster, and the principal branches they belong to are of the same depth-$m$ type.

\par Moreover, if $x_{i_{1}}, \ldots, x_{i_{r}} \in S_{\gamma, \ell}$ for some $1 \leq \ell \leq n^{(1)}_{\gamma}$, and $y_{i_{1}}, \ldots, y_{i_{r}} \in T_{\gamma, \ell'}$ for some $1 \leq \ell' \leq n^{(2)}_{\gamma}$, then $\big\{(x_{i_{1}}, y_{i_{1}}), \ldots, (x_{i_{r}}, y_{i_{r}})\big\}$ is winnable for $\left\{\left(S_{\gamma, \ell}, \sigma_{1}\right), \left(T_{\gamma, \ell'}, \sigma_{2}\right), \Sigma, k\right\}.$

\end{enumerate}

\par We first show that Duplicator can maintain these conditions (via strong induction on $s$). Suppose Duplicator has maintained \ref{same type cond 1} and \ref{same type cond 3} up to round $s$. We call a principal branch (in either tree) \emph{free} if no node has been selected, up to round $s$, from it. Otherwise, we call it \emph{occupied}. For any $\gamma \in \Gamma_{m}$, there exists a free principal branch of type $\gamma$ in $T_{1}(u_{1})|_{m}$ iff there exists a free principal branch of type $\gamma$ in $T_{2}(u_{2})|_{m}$. This is evident from \ref{same type cond 3}, \eqref{same type count}, and from the fact that $s \leq k$.
\par Suppose Spoiler, without loss of generality, picks $x_{s+1}$ in round $s+1$. Duplicator's response is split into a few possible cases:
\begin{enumerate}[label={(B\arabic*)},leftmargin=*]
\item \label{same type response 1} If $x_{s+1} = u_{1}$, then Duplicator sets $y_{s+1} = u_{2}$.
\item \label{same type response 2} Suppose $x_{s+1} \in S_{\gamma, \ell}$ for some $\gamma \in \Gamma_{m}$ and $1 \leq \ell \leq n^{(1)}_{\gamma}$, such that $S_{\gamma, \ell}$ is occupied. Let $\big\{x_{i_{1}}, \ldots, x_{i_{r}}\big\}$ be the $x$-cluster up to round $s$ that belongs to $S_{\gamma, \ell}$. By induction hypothesis \ref{same type cond 3}, there exists some $1 \leq \ell' \leq n^{(2)}_{\gamma}$, such that the $y$-cluster $\big\{y_{i_{1}}, \ldots, y_{i_{r}}\big\} \in T_{\gamma, \ell'}$. Moreover $\big\{(x_{i_{1}}, y_{i_{1}}), \ldots, (x_{i_{r}}, y_{i_{r}})\big\}$ is winnable for $\left\{\left(S_{\gamma, \ell}, \sigma_{1}\right), \left(T_{\gamma, \ell'}, \sigma_{2}\right), \Sigma, k\right\}.$ By Definition~\ref{cor node}, Duplicator finds a corresponding node to $x_{s+1}$ in $T_{\gamma, \ell'}$, and sets it to be $y_{s+1}$. 
\par Note that $\big\{(x_{i_{1}}, y_{i_{1}}), \ldots, (x_{i_{r}}, y_{i_{r}}), (x_{s+1}, y_{s+1})\big\}$ is now winnable for $\left\{\left(S_{\gamma, \ell}, \sigma_{1}\right), \left(T_{\gamma, \ell'}, \sigma_{2}\right), \Sigma, k\right\}$, which immediately satisfies \ref{same type cond 3}. 
\item \label{same type response 3} Suppose $x_{s+1} \in S_{\gamma, \ell}$ for some $\gamma \in \Gamma_{m}$ and $1 \leq \ell \leq n^{(1)}_{\gamma}$, such that $S_{\gamma, \ell}$ was free up to round $s$. Duplicator finds an $1 \leq \ell' \leq n^{(2)}_{\gamma}$ such that $T_{\gamma, \ell'}$ was free up to round $s$. By \eqref{ind hyp same type} and Definition~\ref{cor node}, Duplicator finds $y_{s+1}$ in $T_{\gamma, \ell'}$ that is a corresponding node to $x_{s+1}$ with respect to $\DEHR\left[\left(S_{\gamma, \ell}, \sigma_{1}\right), \left(T_{\gamma, \ell'}, \sigma_{2}\right), \Sigma, k\right]$. 
\end{enumerate}

\par It is straightforward to show that Conditions \ref{same type cond 1} and \ref{same type cond 3} imply \ref{DEHR win 2} through \ref{DEHR win 4}. It is also straightforward to show that they imply \ref{DEHR win 1}, when $x_{i}, x_{j}$ (equivalently, by \ref{same type cond 3}, $y_{i}, y_{j}$) belong to the same principal branch, or when $x_{i} = u_{1}$ (equivalently, $y_{i} = u_{2}$ by \ref{same type cond 1}). Suppose now $v_{1}, v'_{1}$ are two distinct children of $u_{1}$ such that $x_{i}$ belongs to the principal branch at $v_{1}$, and $x_{j}$ belongs to that at $v'_{1}$. By \ref{same type cond 3}, there exist distinct children $v_{2}, v'_{2}$ of $u_{2}$ such that $y_{i}$ belongs to the principal branch at $v_{2}$, and $y_{j}$ belongs to that at $v'_{2}$. Moreover, \ref{same type cond 3} implies that $\rho(x_{i}, v_{1}) = \rho(y_{i}, v_{2})$ and $\rho(x_{j}, v'_{1}) = \rho(y_{j}, v'_{2})$. As the distance between $v_{1}$ and $v'_{1}$, as well as that between $v_{2}$ and $v'_{2}$, is $2$, hence $\rho(x_{i}, x_{j}) = \rho(x_{i}, v_{1}) + \rho(x_{j}, v'_{1}) + 2$, and $\rho(y_{i}, y_{j}) = \rho(y_{i}, v_{2}) + \rho(y_{j}, v'_{2}) + 2$, which gives us \ref{DEHR win 1} for $i, j$. 
\end{proof}

\section{The types game is harder than EHR}\label{type to EHR}
The following theorem is the first of the two main steps in proving Theorem~\ref{main 1}. It shows that the types game is in some sense harder for Duplicator to win than the set-pebble Ehrenfeucht game. To this end, fix a positive integer $k$ and the colour set $\Sigma$. Set
\begin{equation}\label{parameters main 2}
D = 4 \cdot 3^{k+2}, \ D_{0} = 25 D, \ M = D/4.
\end{equation}
We have not bothered with optimizing these constants, as showing the existence of $T_{1}$ and $\mathcal{T}_{2}$ as in Theorem~\ref{main 1} is our main objective.

\begin{theorem}\label{main 2}
Let $T_{1}, T_{2}$ be two trees, with roots $\phi_{1}, \phi_{2}$, such that $T_{1}|_{D_{0}/2}$ and $T_{2}|_{D_{0}/2}$ are both isomorphic to a path of length $D_{0}/2$. If Duplicator wins $\Types\left[T_{1}, T_{2}, \overline{\Sigma}, M, k\right]$, where $\overline{\Sigma}$ is defined using the parameters $D$ and $D_{0}$ as in \eqref{parameters main 2}, then she also wins $\EHR\left[T_{1}, T_{2}, \Sigma, k\right]$.
\end{theorem}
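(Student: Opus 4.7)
The plan is to use Duplicator's winning strategy in the types game, together with Lemma~\ref{same type win DEHR}, to assemble a winning strategy for $\EHR[T_1,T_2,\Sigma,k]$. When Spoiler plays a $(\Sigma,\col_0)$-rooted colouring $\sigma_1$ of $T_1$ in the set round, Duplicator forms its enhancement $\overline{\sigma}_1$ via the parameters from~\eqref{parameters main 2}, and consults her winning reply $\overline{\sigma}_2$ to $\overline{\sigma}_1$ in $\Types[T_1,T_2,\overline{\Sigma},M,k]$. By Lemma~\ref{legal}, $\overline{\sigma}_2$ is legal, so $\overline{\sigma}_2=\overline{\sigma'_2}$ for a unique $(\Sigma,\col_0)$-rooted colouring $\sigma'_2$; Duplicator plays $\sigma_2:=\sigma'_2$ in the EHR set round. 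Thereafter any check of the EHR colour condition~\ref{EHR 2} follows from the first-coordinate projection of the augmented-colour agreement she has just secured.

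In the pebble rounds Duplicator maintains a growing list of anchor pairs $(a_s,b_s)$, where $a_s\in V(T_1)$ and $b_s\in V(T_2)$ share the same $(\overline{\Sigma},M,k)$-type. By Lemma~\ref{same type win DEHR} she wins $\DEHR[(T_1(a_s)|_M,\overline{\sigma}_1),(T_2(b_s)|_M,\overline{\sigma}_2),\overline{\Sigma},k]$ on each such anchored pair, and she runs an ongoing local DEHR strategy in each of these ``mini-games''. When Spoiler selects a node $x\in V(T_1)$ at round $i$, Duplicator classifies $x$ by its graph distance to previously played pebbles using the classical tripling scales: if $x$ lies in the current ball of some active anchor $a_s$, she continues that mini-game by returning the DEHR-strategy reply inside $T_2(b_s)|_M$; otherwise she opens a new mini-game by selecting a suitable ancestor $a_{s+1}$ of $x$ so that $x\in T_1(a_{s+1})|_M$, and uses the types-game match~\eqref{win type}, together with the bound of at most $k$ opened anchors, to locate a fresh $b_{s+1}\in V(T_2)$ of the same $(\overline{\Sigma},M,k)$-type as $a_{s+1}$, far from every prior $b_r$. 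The trunk-path hypothesis on $T_1|_{D_0/2}$ and $T_2|_{D_0/2}$ yields a forced response for any pebble played at depth $\le D_0/2$: the unique $\col'_i$ depth labels pin down a unique matching trunk node at the same depth in the other tree.

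The parameter choice $D=4\cdot 3^{k+2}$, $M=D/4=3^{k+2}$ is calibrated for two purposes. First, because $M<D/2$, all depths within any depth-$M$ ball are distinct modulo $D$, so the augmented colour records unambiguous relative-depth information; combined with the type match, this is what drives Lemma~\ref{same type win DEHR} within each mini-game. Second, the tripling constant $3^{k+2}$ gives enough room to keep anchor balls pairwise disjoint and separated by at least the round-$i$ scale $3^{k-i}$ throughout the $k$ rounds, so Spoiler cannot engineer a collision between different mini-games. The main obstacle, and the technical heart of the proof, is precisely this gluing step: each DEHR mini-game automatically handles EHR conditions~\ref{EHR 1}--\ref{EHR 3} inside its own subtree, but cross-mini-game violations of~\ref{EHR 1} (parent-child) or~\ref{EHR 3} (equality) could in principle arise if anchor balls in one tree abutted while their counterparts in the other did not. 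Making the tripling-and-disjointness bookkeeping precise --- choosing anchors that are simultaneously type-matched, depth-aligned modulo $D$, and geometrically separated from all prior anchors --- and confirming that $M=3^{k+2}$ suffices throughout the game, is where the real combinatorial work lies.
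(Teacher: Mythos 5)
Your overall architecture matches the paper's: in the set round Duplicator enhances Spoiler's colouring, consults her types-game reply, and extracts a legal colouring via Lemma~\ref{legal}; in the pebble rounds she maintains a list of type-matched anchors and runs local DEHR mini-games on the depth-$M$ subtrees rooted at the anchors, invoking Lemma~\ref{same type win DEHR}. The close-move case, the trunk-path bootstrapping, and the role of the tripling scales are all correctly identified.

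However, there is a genuine gap in the far-move case, and it is precisely the step you flagged as ``where the real combinatorial work lies.'' You write that Duplicator can ``locate a fresh $b_{s+1}\in V(T_2)$ of the same $(\overline{\Sigma},M,k)$-type as $a_{s+1}$, far from every prior $b_r$.'' But the types game gives her no control over \emph{where} in $T_2$ the fresh type-matched node sits; she can only guarantee one exists and is distinct from previously used anchors (Remark~\ref{super imp rem}). In particular, a fresh node of the right type could in principle be adjacent to a prior $b_r$, and then $y_{s+1}$ could land close to a prior $y_j$ even though $x_{s+1}$ is far from $x_j$, breaking \ref{EHR 1}/\ref{EHR 3}. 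The paper's resolution is not to \emph{choose} $b_{s+1}$ far away, but to engineer separation indirectly through the depth-mod-$D$ component of the augmented colour: any node with the same augmented type as $u_{s+1}$ automatically has the same depth modulo $D$. Duplicator then splits the far-move case by whether $x_{s+1}$ ``threatens'' some earlier $x_\alpha$ --- i.e.\ whether $\{d(x_{s+1})-d(x_\alpha)\}\bmod D$ is small. In the non-threatening case (Cases NT1/NT2) the modular depth gap alone forces $\rho(y_j,y_{s+1})$ to be large, with no extra care needed. In the threatening case (Cases T1/T2) she must pick $u_{s+1}$ at the \emph{exact} distance $\rho(x_\alpha,u_\alpha)+\Delta_x(s+1,\alpha)$ from $x_{s+1}$ (equation~\eqref{perfect dist}) so that $d(u_{s+1})\equiv d(u_\alpha)\bmod D$; this is what propagates, via the colour, to $d(v_{s+1})\equiv d(v_\alpha)\bmod D$, and only then do the separation estimates for $y_{s+1}$ go through. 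There is also a fallback Case T3 where no such ancestor exists and the move must be absorbed by the trunk with $u_{s+1}=\phi_1$. Your sketch mentions depth alignment modulo $D$ as a bookkeeping constraint but gives no mechanism for \emph{achieving} it; supplying that mechanism --- the threaten/non-threaten dichotomy and the formula \eqref{perfect dist} --- is the missing idea, and it is where the proof would stall if pushed through as written.
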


The proof itself will contain several parts. Duplicator will maintain Conditions \ref{main 2 cond 1} through \ref{main 2 cond 6}, and her response will be split into several nested cases. The broadest cases are given in subsections. But even before we can go into the analysis of these individual cases, we need some more notation and terminology to make the exposition simpler to read.
\par We note here that whenever we talk about the type of a node in the context of Theorem~\ref{main 2}, it is with respect to the set of augmented colours $\overline{\Sigma}$, depth $M$ and cut-off $k$. So we shall simply refer to the $\left(\overline{\Sigma}, M, k\right)$-type of any node as its type in the proof. The set $\Gamma_{\overline{\Sigma}, M, k}$ is abbreviated as simply $\Gamma$, as here there is no chance of confusion regarding the depth of the types.

\begin{proof}[\textbf{Proof of Theorem~\ref{main 2}}]
According to convention \eqref{convention}, we set $x_{0} = \phi_{1}$ and $y_{0} = \phi_{2}$. We let $P_{1} = \{x_{0} \rightarrow w_{1} \rightarrow \cdots \rightarrow w_{D_{0}/2}\}$ denote the path $T_{1}|_{D_{0}/2}$ and $P_{2} = \{y_{0} \rightarrow z_{1} \rightarrow \cdots \rightarrow z_{D_{0}/2}\}$ denote the path $T_{2}|_{D_{0}/2}$.
\par \textbf{Set round:} In the set round of $\EHR$, Spoiler assigns a $(\Sigma, \col_{0})$-rooted colouring $\sigma_{1}$ to $T_{1}$. Consider the enhanced version $\overline{\sigma_{1}}$ of $\sigma_{1}$. As Duplicator wins $\Types\left[T_{1}, T_{2}, \overline{\Sigma}, M, k\right]$, there exists a $\big(\overline{\Sigma}, (\col_{0}, \col'_{0})\big)$-rooted colouring $\sigma'$ on $T_{2}$, which is her winning reply to $\overline{\sigma_{1}}$. By Lemma~\ref{legal}, $\sigma'$ is legal. Hence there exists some $(\Sigma, \col_{0})$-rooted colouring $\sigma_{2}$ of $T_{2}$, such that $\overline{\sigma_{2}} = \sigma'$. Duplicator then assigns $\sigma_{2}$ to $T_{2}$. This concludes the set round.

\par \textbf{Pebble rounds: } Suppose that in the $i$-th round for some $1 \leq i \leq k$, Spoiler, without loss of generality, chooses $x_{i} \in V(T_{1})$. We shall construct a response $y_{i} \in V(T_{2})$ for Duplicator, via two auxiliary nodes, $u_{i} \in V(T_{1})$ and $v_{i} \in V(T_{2})$. We shall select $u_{i}$ to be a suitable ancestor of $x_{i}$; then select $v_{i}$ so that $u_{i}$ and $v_{i}$ have the same types with respect to $\left(T_{1}, \overline{\sigma_{1}}\right)$ and $\left(T_{2}, \overline{\sigma_{2}}\right)$ respectively. Finally we select $y_{i}$ as a suitable descendant of $v_{i}$. The details of this selection procedure are given, as mentioned earlier, in several different cases. In particular, we set $u_{0} = x_{0}$ and $v_{0} = y_{0}$.

\subsection{Some terminology:} This will help elucidate the explanation of Duplicator's strategy. For any $i, j \in [k]$, we say that $x_{i}$ and $x_{j}$ are \emph{close} if 
\begin{equation}\label{close cond}
\rho(x_{i}, x_{j}) \leq 2 \cdot 3^{k+2-(i \vee j)}, 
\end{equation}
otherwise we call them \emph{far} (we similarly define $y_{i}$ and $y_{j}$ to be close or far). We say that $x_{i}$ and $x_{j}$ \emph{threaten each other} if 
\begin{enumerate}
\item the auxiliary node $u_{i \wedge j}$ does not equal $x_{0}$, 
\item and 
\begin{equation}\label{threat cond}
\left|\Delta_{x}(i, j)\right| \leq 2 \cdot 3^{k+2-(i \vee j)},
\end{equation}
where $\Delta_{x}(i, j) := \left\{d(x_{i}) - d(x_{j})\right\} \bmod D$.
\end{enumerate}
Similarly we say that $y_{i}$ and $y_{j}$ threaten each other if $v_{i \wedge j}$ is not equal to $y_{0}$ and the condition analogous to \eqref{threat cond} holds.

\subsection{Conditions on the configuration Duplicator maintains throughout the pebble rounds:}
Suppose $s$ pebble rounds of the game have been played, where $1 \leq s \leq k$. The following are the conditions Duplicator maintains on the configuration $\{(x_{i}, y_{i}): 1 \leq i \leq s\}$, together with the auxiliary nodes $\{(u_{i}, v_{i}): 1 \leq i \leq s\}$. For all $i, j \in [s]$,
\begin{enumerate}[label={(C \arabic*)},leftmargin=*]
\item \label{main 2 cond 1} $u_{i} = u_{j} \Leftrightarrow v_{i} = v_{j}$. 
\item \label{main 2 cond 2} $u_{i}$ and $v_{i}$ have the same types. In particular, this tells us that $u_{i} = x_{0} \Leftrightarrow v_{i} = y_{0}$, by Remark \ref{po 1} below.
\item \label{main 2 cond 5} The configuration $\left\{(x_{\ell}, y_{\ell}): \ell \in [s], u_{\ell} = u_{i}\right\}$, of pairs that share the auxiliary node $u_{i}$, is winnable for $$\Big\{\big(T_{1}(u_{i})\big|_{M}, \overline{\sigma_{1}}\big), \big(T_{2}(v_{i})\big|_{M}, \overline{\sigma_{2}}\big), \overline{\Sigma}, k\Big\}.$$ Note that from Lemma~\ref{obs imp later}, this gives us $\rho(x_{i}, u_{i}) = \rho(y_{i}, v_{i})$ and $\overline{\sigma_{1}}(x_{i}) = \overline{\sigma_{2}}(y_{i})$.
\item \label{main 2 cond 3} If $u_{i}$ is not $x_{0}$, then 
\begin{equation}\label{main 2 cond 3 ineq}
3^{k+2-i} \leq \rho(x_{i}, u_{i}) \leq M - 3^{k+2-i}.
\end{equation}
If $u_{i}$ equals $x_{0}$, then the upper bound from \eqref{main 2 cond 3 ineq} on $\rho(x_{i}, u_{i})$ holds.
\item \label{main 2 cond 4} $x_{i}$ and $x_{j}$ are close if and only if $y_{i}$ and $y_{j}$ are close as well, and in that case $u_{i} = u_{j}$, $v_{i} = v_{j}$ and $\rho(x_{i}, x_{j}) = \rho(y_{i}, y_{j})$. 
\item \label{main 2 cond 6} If $x_{i}$ and $x_{j}$ threaten each other, then either $u_{i \vee j} = x_{0}$ and $v_{i \vee j} = y_{0}$, or 
\begin{equation} \label{main 2 cond 6 op 2}
d(u_{i}) \equiv d(u_{j}) \bmod D \text{ and } d(v_{i}) \equiv d(v_{j}) \bmod D.
\end{equation}
(\textbf{Note:} If Condition \ref{main 2 cond 2} holds, then $u_{i}, v_{i}$ have the same types, and so do $u_{j}, v_{j}$. Hence $\overline{\sigma_{1}}(u_{i}) = \overline{\sigma_{2}}(v_{i})$, which implies $d(v_{i}) \equiv d(u_{i}) \bmod D$. Similarly, $d(v_{j}) \equiv d(u_{j}) \bmod D$. Hence, in the sequel, we need only verify one of the two congruences in \eqref{main 2 cond 6 op 2}, and the other will follow.)
\end{enumerate}
We prove next that Duplicator can maintain \ref{main 2 cond 1} through \ref{main 2 cond 6} using strong induction on $s$. Without loss of generality, let Spoiler choose $x_{s+1}$ from $T_{1}$ in the $(s+1)$-st pebble round. The response of Duplicator will vary significantly over a few possible cases, which are detailed in the sequel. However, some common remarks apply to many of them, and for brevity, these are mentioned before we go into the nested classifications.

\begin{rem}\label{po 1} 
Since in any $\big(\overline{\Sigma}, (\col_{0}, \col'_{0})\big)$-rooted colouring, the root is the only node which gets the colour $(\col_{0}, \col'_{0})$, its type is unique. Moreover, under the assumptions of Theorem~\ref{main 2}, the type of every $w_{i}$ in $T_{1}$ and every $z_{i}$ in $T_{2}$, where $1 \leq i \leq D_{0}/2$, is unique. This is because each $w_{i}$ (correspondingly $z_{i}$) is the only node such that the second coordinate of $\overline{\sigma_{1}}(w_{i})$ \big(correspondingly $\overline{\sigma_{2}}(z_{i})$\big) is $\col'_{i}$.
\end{rem}

\begin{rem}\label{common 1}
From Condition \ref{main 2 cond 5}, we get $\overline{\sigma_{1}}(x_{j}) = \overline{\sigma_{2}}(y_{j})$, which implies, from \eqref{depth fn}, that 
\begin{enumerate}
\item $d(x_{j}) \leq D_{0} \Leftrightarrow d(y_{j}) \leq D_{0}$, and in that case $d(x_{j}) = d(y_{j})$;
\item $d(x_{j}) > D_{0} \Leftrightarrow d(y_{j}) > D_{0}$, and in that case $d(x_{j}) \equiv d(y_{j}) \bmod D$.
\end{enumerate}
\end{rem}

\begin{rem}\label{po 2} Suppose $x_{i}$ and $x_{j}$ threaten each other, and $i < j$. Then $u_{i}$ does not equal $x_{0}$. From Condition \ref{main 2 cond 2}, $v_{i}$ does not equal $y_{0}$ either. From \ref{main 2 cond 5}, we have $\overline{\sigma_{1}}(x_{i}) = \overline{\sigma_{2}}(y_{i})$, which in turn gives $d(x_{i}) \equiv d(y_{i})\bmod D$. Similarly we have $d(x_{j}) \equiv d(y_{j})\bmod D$. Hence
\begin{align}\label{mod dist same x y}
\bigl|\Delta_{y}(i, j)\bigr| &:= \bigl|\left\{d(y_{i}) - d(y_{j})\right\} \bmod D\bigr|\ = \bigl|\left\{d(x_{i}) - d(x_{j})\right\} \bmod D\bigr| = \bigl|\Delta_{x}(i, j)\bigr|. 
\end{align}
From \eqref{threat cond}, this shows that $y_{i}$ and $y_{j}$ also threaten each other. The converse is also true, i.e. if $y_{i}$ and $y_{j}$ threaten each other, then so do $x_{i}$ and $x_{j}$.
\end{rem}

\begin{rem}\label{super imp rem}
Fix any $\gamma \in \Gamma$, and $1 \leq s \leq k$. If Conditions \ref{main 2 cond 1} and \ref{main 2 cond 2} are maintained throughout the first $s$ pebble rounds of the game, it is not difficult to see that:
\begin{equation}\label{equal number utilized}
\big|\left\{\ell \in [s]: u_{\ell} \text{ of type } \gamma\right\}\big| = \big|\left\{\ell \in [s]: v_{\ell} \text{ of type } \gamma\right\}\big|.
\end{equation}
Now recall that $\overline{\sigma_{2}}$ is a winning reply of Duplicator to $\overline{\sigma_{1}}$ for $\Types\left[T_{1}, T_{2}, \overline{\Sigma}, M,  k\right]$. If $n^{(1)}_{\sigma}$ denotes the number of nodes in $(T_{1}, \overline{\sigma_{1}})$ with type $\gamma$ and $n^{(2)}_{\sigma}$ that in $(T_{2}, \overline{\sigma_{2}})$, then $n^{(1)}_{\sigma} \wedge k = n^{(2)}_{\sigma} \wedge k$. Since $s \leq k$, this shows the following: if $u_{s+1} \neq u_{\ell}$ for all $\ell \in [s]$, and $u_{s+1}$ is of type $\gamma$, then Duplicator can find $v_{s+1}$ of type $\sigma$ in $(T_{2}, \overline{\sigma_{2}})$ such that $v_{s+1} \neq v_{\ell}$ for all $\ell \in [s]$. 
\end{rem}

\subsection{The close move case:} Suppose there exists some $\alpha \in [s]$ such that $x_{s+1}$ is close to $x_{\alpha}$. Duplicator then sets $u_{s+1} = u_{\alpha}$ and $v_{s+1} = v_{\alpha}$. She then selects $y_{s+1}$ as a corresponding node to $x_{s+1}$ in $T_{2}(v_{s+1})\big|_{M}$, given the configuration $\big\{(x_{\ell}, y_{\ell}): \ell \in [s], u_{\ell} = u_{\alpha}\big\}$, which we know is winnable for $\Big\{\big(T_{1}(u_{\alpha})|_{M}, \overline{\sigma_{1}}\big), \big(T_{2}(v_{\alpha})|_{M}, \overline{\sigma_{2}}\big), \overline{\Sigma}, k\Big\}$, by induction hypothesis \ref{main 2 cond 5}. Note that these choices of Duplicator immediately satisfy Condition \ref{main 2 cond 1}. They satisfy Condition \ref{main 2 cond 2} since by induction hypothesis \ref{main 2 cond 2} applied to round $\alpha$, the nodes $u_{\alpha}$ and $v_{\alpha}$ have the same types. 
\par As $y_{s+1}$ is a corresponding node to $x_{s+1}$, by Definition~\ref{cor node}, the configuration $\big\{(x_{\ell}, y_{\ell}): \ell \in [s+1], u_{\ell} = u_{\alpha} = u_{s+1}\big\}$ is now winnable for $\Big\{\big(T_{1}(u_{\alpha})|_{M}, \overline{\sigma_{1}}\big), \big(T_{2}(v_{\alpha})|_{M}, \overline{\sigma_{2}}\big), \overline{\Sigma}, k\Big\}$. This satisfies Condition \ref{main 2 cond 5} for round $s+1$. By induction hypothesis \ref{main 2 cond 3} applied to round $\alpha$, we have $\rho(x_{\alpha}, u_{\alpha}) \leq M - 3^{k+2-\alpha}$. Using triangle inequality and \eqref{close cond} applied to $x_{\alpha}, x_{s+1}$, we get:
\begin{align}
\rho(x_{s+1}, u_{s+1}) &= \rho(x_{s+1}, u_{\alpha}) \leq \rho(x_{s+1}, x_{\alpha}) + \rho(x_{\alpha}, u_{\alpha}) \leq 2 \cdot 3^{k+1-s} + M - 3^{k+2-\alpha} \leq M - 3^{k+1-s}, \nonumber
\end{align}
hence verifying the upper bound for \eqref{main 2 cond 3 ineq}. Further, when $u_{\alpha}$ does not equal $x_{0}$, we have $\rho(x_{\alpha}, u_{\alpha}) \geq 3^{k+2-\alpha}$, hence again applying triangle inequality:
\begin{align}
\rho(x_{s+1}, u_{s+1}) & \geq \rho(x_{\alpha}, u_{\alpha}) - \rho(x_{s+1}, x_{\alpha}) \geq 3^{k+2-\alpha} - 2 \cdot 3^{k+1-s} \geq 3^{k+1-s}, \nonumber
\end{align}
giving us the lower bound in \eqref{main 2 cond 3 ineq}. This completes verification of Condition \ref{main 2 cond 3}.

\par As $\big\{(x_{\ell}, y_{\ell}): \ell \in [s+1], u_{\ell} = u_{\alpha} = u_{s+1}\big\}$ is winnable, by Lemma~\ref{obs imp later}, we get $\rho(x_{s+1}, x_{\alpha}) = \rho(y_{s+1}, y_{\alpha})$, hence $y_{s+1}$ and $y_{\alpha}$ are close as well. Suppose now for some $j \in [s]$, where $j$ is distinct from $\alpha$, the nodes $x_{j}$ and $x_{s+1}$ are close. Then, by \eqref{close cond} applied to the pairs $x_{\alpha}, x_{s+1}$, and $x_{j}, x_{s+1}$, and triangle inequality, we get:
$$\rho(x_{\alpha}, x_{j}) \leq \rho(x_{\alpha}, x_{s+1}) + \rho(x_{s+1}, x_{j}) \leq 4 \cdot 3^{k+1-s} < 2 \cdot 3^{k+2-(\alpha \vee j)}.$$ Hence $x_{j}$ and $x_{\alpha}$ are close as well. By induction hypothesis \ref{main 2 cond 4} applied to round $\alpha \vee j$, we then have $u_{j} = u_{\alpha} = u_{s+1}$ and $v_{j} = v_{\alpha} = v_{s+1}$. Hence, again by Lemma~\ref{obs imp later}, we have $\rho(x_{j}, x_{s+1}) = \rho(y_{j}, y_{s+1})$.

\par Finally we verify that Condition \ref{main 2 cond 6} holds. Suppose there exists some $j \in [s]$ such that $x_{j}$ and $x_{s+1}$ threaten each other. This means that $u_{j}$ does not equal $x_{0}$ and hence $v_{j}$ is not $y_{0}$ either. If it so happens that $u_{s+1} $ equals $x_{0}$ (and hence $v_{s+1}$ equals $y_{0}$), then nothing left to verify. If not, then note that $u_{\alpha} \neq x_{0}$ either. Further,
\begin{align}
\big|\Delta_{x}(j, \alpha)\big| &\leq \bigl|\left\{d(x_{j}) - d(x_{s+1})\right\} \bmod D\bigr| + \bigl|\left\{d(x_{s+1}) - d(x_{\alpha})\right\} \bmod D\bigr| \nonumber\\
&\leq \bigl|\left\{d(x_{j}) - d(x_{s+1})\right\} \bmod D\bigr| + \rho(x_{s+1}, x_{\alpha}) \nonumber\\
&\leq 2 \cdot 3^{k+1-s} + 2 \cdot 3^{k+1-s} < 2 \cdot 3^{k+2-(j \vee \alpha)}. \nonumber
\end{align}
These show that $x_{j}$ and $x_{\alpha}$ also threaten each other. By induction hypothesis \ref{main 2 cond 6} applied to round $j \vee \alpha$, we have $d(u_{j}) \equiv d(u_{\alpha}) \bmod D$ and $d(v_{j}) \equiv d(v_{\alpha}) \bmod D$. Since $u_{s+1} = u_{\alpha}$ and $v_{s+1} = v_{\alpha}$, this gives us Condition \ref{main 2 cond 6} for the indices $j$ and $s+1$.

\subsection{The far move case:} In this case, $x_{s+1}$ is far from $x_{j}$ for every $j \in [s]$. As it's far, the only part of Condition \ref{main 2 cond 4} we need to verify is that, for all $j \in [s]$,
\begin{equation}\label{verify part of cond 4}
\rho(y_{j}, y_{s+1}) > 2 \cdot 3^{k+1-s}.
\end{equation}
This kind of a move calls for splitting into a few possible scenarios, and Duplicator's response in each such scenario will be considerably different. These scenarios are described in the subsections below. 

\subsubsection{\textbf{When the move threatens a previously selected node:}}\label{threat subsection} Suppose there exists some $\alpha \in [s]$ such that $x_{\alpha}$ and $x_{s+1}$ threaten each other. This means that $u_{\alpha}$ does not equal $x_{0}$ and \eqref{threat cond} holds for the pair $x_{\alpha}, x_{s+1}$. This case now splits into a few subcases, as follows (here ``T" represents ``threatening a previous move").\\
\par \textbf{Case T1}: Suppose that there exists an ancestor $u$ of $x_{s+1}$ at distance
\begin{equation}\label{perfect dist}
\rho(x_{s+1}, u) = \rho(x_{\alpha}, u_{\alpha}) + \Delta_{x}(s+1, \alpha),
\end{equation}
from it, and there exists some $\beta \in [s]$ such that $u_{\beta} = u$. Then Duplicator sets $u_{s+1} = u$. 

\par A quick point to observe is that when $d(x_{s+1}) > D_{0}/2$, such an ancestor clearly exists, as is clear from the upper bound of \eqref{main 2 cond 3 ineq} applied to the index $\alpha$, and \eqref{threat cond} applied to the pair $x_{\alpha}, x_{s+1}$. The analysis for this case is now given as follows.

\par Duplicator now sets $v_{s+1} = v_{\beta}$. This is in direct keeping with Condition \ref{main 2 cond 1}, and this condition will therefore not need further verification for \textbf{Case T1}. By induction hypothesis \ref{main 2 cond 2} applied to round $\beta$, we know that $u_{\beta}$ and $v_{\beta}$ have the same types. Hence so do $u_{s+1}$ and $v_{s+1}$, thus showing that Condition \ref{main 2 cond 2} holds for round $s+1$.

\par By induction hypothesis \ref{main 2 cond 5} applied to rounds $s$, we know that $\big\{(x_{\ell}, y_{\ell}): \ell \in [s] \text{ and } u_{\ell} = u_{\beta}\big\}$ is winnable for $\Big\{\big(T_{1}(u_{\beta})|_{M}, \overline{\sigma_{1}}\big), \big(T_{2}(v_{\beta})|_{M}, \overline{\sigma_{2}}\big), \overline{\Sigma}, k\Big\}$. Hence, by Definition~\ref{cor node}, Duplicator can find a corresponding node to $x_{s+1}$ in $T_{2}(v_{s+1})\big|_{M}$, which she then sets to be $y_{s+1}$. 

\par Note that, also by Definition~\ref{cor node}, the new configuration $\big\{(x_{\ell}, y_{\ell}): \ell \in [s+1], u_{\ell} = u_{s+1}\big\}$ is winnable for $\Big\{\big(T_{1}(u_{s+1})|_{M}, \overline{\sigma_{1}}\big), \big(T_{2}(v_{s+1})|_{M}, \overline{\sigma_{2}}\big), \overline{\Sigma}, k\Big\}$. This directly gives us validity of Condition \ref{main 2 cond 5} up to and including step $s+1$. 

\par In verifying Condition \ref{main 2 cond 3} for \textbf{Case T1}, it is important to note that we only make use of \eqref{perfect dist}, and bounds on $\Delta_{x}(\alpha, s+1)$ and $\rho(x_{\alpha}, u_{\alpha})$. This is worth noting, because this exact same verification will go through for \textbf{Case T2}. We now show that the upper bound of \eqref{main 2 cond 3 ineq} holds for $s+1$ (using induction hypothesis \ref{main 2 cond 3} applied to round $\alpha$ and \eqref{threat cond}):
\begin{align}
\rho(x_{s+1}, u_{s+1}) &= \Delta_{x}(s+1, \alpha) + \rho(x_{\alpha}, u_{\alpha}) \nonumber\\
&\leq 2 \cdot 3^{k+1-s} + M - 3^{k+2-\alpha} \leq M - 3^{k+1-s}, \text{ as } \alpha \leq s. \nonumber
\end{align}
Note that by \eqref{threat cond}, we also have $\Delta_{x}(s+1, \alpha) \geq -2 \cdot 3^{k+1-s}$. By definition, as $x_{\alpha}$ and $x_{s+1}$ threaten each other, hence $u_{\alpha}$ does not equal $x_{0}$, hence the lower bound of \eqref{main 2 cond 3 ineq} holds for the index $\alpha$. If $u_{s+1}$ does not equal $x_{0}$, we have
\begin{align}
\rho(x_{s+1}, u_{s+1}) &= \Delta_{x}(s+1, \alpha) + \rho(x_{\alpha}, u_{\alpha}) \nonumber\\
&\geq -2 \cdot 3^{k+1-s} + 3^{k+2-\alpha} \geq 3^{k+1-s}. \nonumber
\end{align}
This completes the verification for \eqref{main 2 cond 3 ineq} and hence Condition \ref{main 2 cond 3} for round $s+1$. 
\par Once again, in verifying Condition \ref{main 2 cond 6} for \textbf{Case T1}, it is important to note that we crucially make use of \eqref{perfect dist} and \eqref{threat cond} applied to suitable pairs; hence this verification too goes through verbatim for \textbf{Case T2}. We first verify Condition \ref{main 2 cond 6} for the pair $\alpha, s+1$. If $u_{s+1} = x_{0}$ (which implies $v_{s+1} = y_{0}$), nothing left to verify. So, assume $u_{s+1}$ does not equal $x_{0}$. Then
\begin{align} \label{verify cond 6 1}
d(u_{s+1}) &= d(x_{s+1}) - \rho(x_{s+1}, u_{s+1}) \nonumber\\
&= d(x_{s+1}) - \rho(x_{\alpha}, u_{\alpha}) - \Delta_{x}(s+1, \alpha) \nonumber\\
&= d(x_{s+1}) - \rho(x_{\alpha}, u_{\alpha}) - \big\{d(x_{s+1}) - d(x_{\alpha})\big\} \bmod D \nonumber\\
&\equiv \big\{d(x_{\alpha}) - \rho(x_{\alpha}, u_{\alpha})\big\}\bmod D \equiv d(u_{\alpha}) \bmod D. 
\end{align}
This completes the verification of \eqref{main 2 cond 6 op 2} applied to the pair $\alpha, s+1$. Suppose now $j$, distinct from $\alpha$, is another index in $[s]$ such that $x_{j}$ and $x_{s+1}$ threaten each other. This means that $u_{j}$ does not equal $x_{0}$ and \eqref{threat cond} holds for the pair $j, s+1$. We also have $u_{\alpha} \neq x_{0}$. By triangle inequality and \eqref{threat cond} applied to pairs $\alpha, s+1$ and $j, s+1$, we get:
\begin{align}
\big|\Delta_{x}(j, \alpha)\big| &= \big|\left\{d(x_{j}) - d(x_{\alpha})\right\} \bmod D\big| \nonumber\\
&\leq \big|\left\{d(x_{j}) - d(x_{s+1})\right\} \bmod D\big| + \big|\left\{d(x_{s+1}) - d(x_{\alpha})\right\} \bmod D\big| \nonumber\\
&\leq 4 \cdot 3^{k+1-s} < 2 \cdot 3^{k+2-(j \vee \alpha)}. \nonumber
\end{align}
This shows that $x_{j}$ and $x_{\alpha}$ also threaten each other. Hence, by induction hypothesis \ref{main 2 cond 6} applied to round $j \vee \alpha$, we must have $d(u_{j}) \equiv d(u_{\alpha}) \bmod D$. Combining this with \eqref{verify cond 6 1}, we get $d(u_{s+1}) \equiv d(u_{j}) \bmod D$. This completes the verification of the entire Condition \ref{main 2 cond 6}.
\par Finally we verify \eqref{verify part of cond 4}. For this we shall consider various categories of $j \in [s]$. If $j$ is such that $u_{j} = u_{s+1}$, then from Condition \ref{main 2 cond 5}, and Lemma~\ref{obs imp later}, we conclude that $\rho(y_{j}, y_{s+1}) = \rho(x_{j}, x_{s+1})$, which implies \eqref{verify part of cond 4}. The next few categorizations of $j$ are actually relevant also in \textbf{Case T2}, and the verification of \eqref{verify part of cond 4} for \textbf{Case T2} follows exactly the steps described below.
\par Suppose $j$ is such that $x_{j}$ and $x_{s+1}$ threaten each other. This means that $u_{j}$ is not $x_{0}$. From \eqref{main 2 cond 6 op 2}, we have two possibilities:
\par \textbf{Possibility 1:} Here, $d(v_{j}) = d(v_{s+1})$. Our only interest is when $v_{j} \neq v_{s+1}$ (since otherwise, $u_{j} = u_{s+1}$, and that has been dealt with above). But this means that $\rho(v_{j}, v_{s+1}) \geq 2$. We have already verified Condition \ref{main 2 cond 3}, hence $\rho(y_{s+1}, v_{s+1}) = \rho(x_{s+1}, u_{s+1}) \geq 3^{k+1-s}$. As $u_{j}$ is not $x_{0}$, by induction hypothesis \ref{main 2 cond 3} applied to round $j$, we get $\rho(y_{j}, v_{j}) = \rho(x_{j}, u_{j}) \geq 3^{k+2-j}$. We therefore have:
\begin{align}
\rho(y_{j}, y_{s+1}) &= \rho(y_{j}, v_{j}) + \rho(v_{j}, v_{s+1}) + \rho(v_{s+1}, y_{s+1}) \nonumber\\
&\geq 3^{k+2-j} + 2 + 3^{k+1-s} > 2 \cdot 3^{k+1-s}. \nonumber
\end{align}

\par \textbf{Possibility 2:} Here $d(v_{j}) \neq d(v_{s+1})$, but then by \eqref{main 2 cond 6 op 2}, we must have $\bigl|d(v_{j}) - d(v_{s+1})\bigr| \geq D$. We now use the upper bounds from \eqref{main 2 cond 3 ineq}, applied to both $j$ and $s+1$, and triangle inequality, to get:
\begin{align}
\rho(y_{j}, y_{s+1}) &\geq \rho(v_{j}, v_{s+1}) - \rho(v_{j}, y_{j}) - \rho(v_{s+1}, y_{s+1}) \nonumber\\
&\geq \bigl|d(v_{j}) - d(v_{s+1})\bigr| - \rho(u_{j}, x_{j}) - \rho(u_{s+1}, x_{s+1}) \nonumber\\
&\geq D - M + 3^{k+2-j} - M + 3^{k+1-s} \nonumber\\
&= 4 \cdot 3^{k+2} - 2 \cdot 3^{k+2} + 3^{k+2-j} + 3^{k+1-s} \nonumber\\
&= 2 \cdot 3^{k+2} + 3^{k+2-j} + 3^{k+1-s} > 2 \cdot 3^{k+1-s}. \nonumber
\end{align}
Finally, consider $j \in [s]$ such that $x_{j}$ and $x_{s+1}$ do not threaten each other. Again there are two possibilities:
\par \textbf{Possibility 1:} $x_{j}$ and $x_{s+1}$ do not threaten each other because $u_{j}$ equals $x_{0}$. Then note that by induction hypothesis \ref{main 2 cond 2}, we know that $v_{j}$ equals $y_{0}$, and by induction hypothesis \ref{main 2 cond 3} applied to round $j$, we have:
\begin{align}
d(y_{j}) &= d(v_{j}) + \rho(v_{j}, y_{j}) = d(y_{0}) + \rho(u_{j}, x_{j}) \leq M - 3^{k+2-j} < D_{0}/2. \nonumber
\end{align}
This means that $y_{j}$ lies on the path $T_{1}|_{D_{0}/2}$. If we have $d(y_{s+1}) > D_{0}/2$, then $$\rho(y_{s+1}, y_{j}) \geq d(y_{s+1}) - d(y_{j}) \geq D_{0}/2 - M + 3^{k+2-j} > 2 \cdot 3^{k+1-s}.$$ If on the other hand, we have $d(y_{s+1}) \leq D_{0}/2$, then since both $y_{s+1}$ and $y_{j}$ lie on the path $T_{1}|_{D_{0}/2}$, by Remark \ref{common 1}, we have 
$$\rho(y_{s+1}, y_{j}) = \left|d(y_{s+1}) - d(y_{j})\right| = \left|d(x_{s+1}) - d(x_{j})\right| = \rho(x_{j}, x_{s+1}),$$
hence again the desired inequality \eqref{verify part of cond 4} holds.
\par \textbf{Possibility 2:} If $u_{j}$ does not equal $x_{0}$, then $x_{j}$ and $x_{s+1}$ do not threaten each other because \eqref{threat cond} does not hold. From \eqref{mod dist same x y}, we then have $\rho(y_{j}, y_{s+1}) \geq \bigl|\Delta_{y}(j, s+1)\bigr| > 2 \cdot 3^{k+1-s}$. This completes the verification of \eqref{verify part of cond 4}.
\par This concludes the verification of Conditions \ref{main 2 cond 1} through \ref{main 2 cond 6} for \textbf{Case T1}, up to and including step $s+1$.\\

\par \textbf{Case T2}: Suppose there exists an ancestor $u_{s+1}$ of $x_{s+1}$ such that \eqref{perfect dist} holds, but there exists no $\ell \in [s]$ with $u_{\ell} = u_{s+1}$. By Remark \ref{super imp rem}, Duplicator can find a $v_{s+1}$ with the same type as $u_{s+1}$, and $v_{s+1} \neq v_{\ell}$ for all $\ell \in [s]$. This also immediately gives us validity of Condition \ref{main 2 cond 1} and \ref{main 2 cond 2} up to round $(s+1)$.

\par By Lemma~\ref{same type win DEHR} and Duplicator's choice of $v_{s+1}$, note that she wins $\DEHR\Big[\big(T_{1}(u_{s+1})|_{M}, \overline{\sigma_{1}}\big), \big(T_{2}(v_{s+1})|_{M}, \overline{\sigma_{2}}\big), \overline{\Sigma}, k\Big]$. Hence by Definition~\ref{cor node}, she can find $y_{s+1}$ in $T_{2}(v_{s+1})\big|_{M}$ which is a corresponding node to $x_{s+1}$, and this is her choice for round $s+1$.

\par By Definition~\ref{cor node}, this tells us that $\big\{(x_{s+1}, y_{s+1})\big\}$ is a winnable configuration for $\Big\{\big(T_{1}(u_{s+1})|_{M}, \overline{\sigma_{1}}\big), \big(T_{2}(v_{s+1})|_{M}, \overline{\sigma_{2}}\big), \overline{\Sigma}, k\Big\}$, hence Condition \ref{main 2 cond 5} holds up to round $s+1$. As mentioned above, the verification of both Conditions \ref{main 2 cond 3} and \ref{main 2 cond 6} are done in exactly the same way as in \textbf{Case T1}, since they make use of the fact that $\rho(u_{s+1}, x_{s+1})$ satisfies \eqref{perfect dist}. The verification of \eqref{verify part of cond 4} in \textbf{Case T2} is a subset of how we verify it for \textbf{Case T1} (also mentioned above), since we do not have any $j \in [s]$ where $u_{j} = u_{s+1}$; the rest is exactly the same as before. 

\par This concludes the verification of Conditions \ref{main 2 cond 1} through \ref{main 2 cond 6} for \textbf{Case T1}, up to and including step $s+1$.\\

\par \textbf{Case T3}: There does not exist any ancestor of $x_{s+1}$ such that \eqref{perfect dist} holds. This can happen only if $d(x_{s+1}) < \Delta_{x}(\alpha, s+1) + \rho(x_{\alpha}, u_{\alpha})$. In particular, from \eqref{threat cond}, and by induction hypothesis \ref{main 2 cond 3} applied to round $\alpha$, we have 
\begin{align}\label{to verify cond 3 later}
d(x_{s+1}) & \leq M - 3^{k+2-\alpha} + 2 \cdot 3^{k+1-s} \leq M - 3^{k+1-s}, 
\end{align}
thus showing that $x_{s+1}$ lies on the path $T_{1}|_{M} \subseteq T_{1}|_{D_{0}/2}$. In this case, she sets $u_{s+1} = x_{0}$ and $v_{s+1} = y_{0}$, which automatically give us validation of Conditions \ref{main 2 cond 1} and \ref{main 2 cond 2}.
\par By induction hypothesis \ref{main 2 cond 5}, we know that $\big\{(x_{\ell}, y_{\ell}): \ell \in [s], u_{\ell} = u_{0} = x_{0}\big\}$ is winnable for $\Big\{\big(T_{1}|_{M}, \overline{\sigma_{1}}\big), \big(T_{2}|_{M}, \overline{\sigma_{2}}\big), \overline{\Sigma}, k\Big\}$. Hence by Definition~\ref{cor node}, Duplicator chooses $y_{s+1}$ to be a corresponding node to $x_{s+1}$ in $T_{2}|_{M}$. We wish to stress here the fact that the choice of the corresponding node is actually unique. This is because, $x_{s+1}$ lies on the path $T_{1}|_{D_{0}/2}$. If $x_{s+1} = w_{i}$ for some $i \leq M - 3^{k+1-s}$, then the only node in $T_{2}|_{D_{0}/2}$ with the same type as $w_{i}$ is $z_{i}$ (see Remark \ref{po 1}). Further, this choice of $y_{s+1}$, by Definition~\ref{cor node}, guarantees that $\big\{(x_{\ell}, y_{\ell}): \ell \in [s+1], u_{\ell} = u_{0} = x_{0}\big\}$ is winnable for $\Big\{\big(T_{1}|_{M}, \overline{\sigma_{1}}\big), \big(T_{2}|_{M}, \overline{\sigma_{2}}\big), \overline{\Sigma}, k\Big\}$, hence giving us validity of Condition \ref{main 2 cond 5} for round $s+1$.

\par From \eqref{to verify cond 3 later} we also get the upper bound of \eqref{main 2 cond 3 ineq} for round $s+1$. We do not need to verify the lower bound because here $u_{s+1} = x_{0}$. We need no verification for Condition \ref{main 2 cond 6} since we already have $u_{s+1} = x_{0}$ and $v_{s+1} = y_{0}$. Finally, we come to the verification of \eqref{verify part of cond 4}. If $j$ is such that $x_{j}$ lies on the path $T_{1}|_{D_{0}/2}$ and equals $w_{i'}$, then by Remark \ref{po 1}, $y_{j} = z_{i'}$, for any $i' \leq D_{0}/2$. In this case $\rho(y_{j}, y_{s+1}) = |i - i'| = \rho(x_{j}, x_{s+1})$, thus giving us \eqref{verify part of cond 4}. If $j$ is such that $d(x_{j}) > D_{0}/2$, then by Remark \ref{common 1} we know that $d(y_{j}) > D_{0}/2$ as well. Hence 
$$\rho(y_{j}, y_{s+1}) = d(y_{j}) - d(y_{s+1}) > \frac{D_{0}}{2} - M + 3^{k+1-s} > 2 \cdot 3^{k+1-s}.$$
We thus conclude the verifications of Conditions \ref{main 2 cond 1} through \ref{main 2 cond 6} for \textbf{Case T3} up to and including round $s+1$.

\subsubsection{\textbf{When the move threatens no previously selected node:}} Here Duplicator chooses $u_{s+1}$ to be the ancestor at distance $3^{k+1-s}$ from $x_{s+1}$. Such an ancestor exists since $x_{s+1}$ is far from every previously selected $x_{j}$, which implies that we have $\rho(x_{0}, x_{s+1}) > 2 \cdot 3^{k+1-s}$. Note that $\rho(x_{s+1}, u_{s+1}) = 3^{k+1-s}$ immediately gives us \eqref{main 2 cond 3 ineq} and hence Condition \ref{main 2 cond 3}. So we do not verify this separately for the two subcases anymore.
\par As $x_{s+1}$ does not threaten any $x_{j}$ for $j \in [s]$, hence we do not need to verify Condition \ref{main 2 cond 6} henceforth. We can do a common verification of \eqref{verify part of cond 4} right here for both \textbf{Case NT1} and \textbf{Case NT2}, subject to the condition that we verify Conditions \ref{main 2 cond 2} and \ref{main 2 cond 5} separately for each of them. For any $j \in [s]$, since $x_{j}$ and $x_{s+1}$ do not threaten each other, by Remark \ref{po 2}, neither do $y_{j}$ and $y_{s+1}$. This can happen because of two reasons:
\begin{enumerate}
\item This happens because $u_{j} = x_{0}$. By induction hypothesis \ref{main 2 cond 2} applied to round $j$, this also gives $v_{j} = y_{0}$. Now, from induction hypothesis \ref{main 2 cond 3} applied to round $j$, we have $d(y_{j}) = \rho(y_{j}, v_{j}) = \rho(y_{j}, y_{0}) \leq M - 3^{k+1-j}$. Hence $y_{j}$ lies on the path $T_{2}|_{D_{0}/2}$.  
\par If $y_{s+1}$ also lies on $T_{2}|_{D_{0}/2}$, then from Remark \ref{common 1}, we have $\rho(y_{j}, y_{s+1}) = \left|d(y_{j}) - d(y_{s+1})\right| = \left|d(x_{j}) - d(x_{s+1})\right| = \rho(x_{j}, x_{s+1})$. Hence \eqref{verify part of cond 4} holds.
\par If $y_{s+1}$ does not lie on $T_{2}|_{D_{0}/2}$, then we have $\rho(y_{j}, y_{s+1}) = d(y_{s+1}) - d(y_{j}) > D_{0}/2 - M + 3^{k+1-j} > 2 \cdot 3^{k+1-s}$.

\item $x_{j}$ and $x_{s+1}$ do not threaten each other because \eqref{threat cond} does not hold for them. By \eqref{mod dist same x y}, we have $\rho(y_{j}, y_{s+1}) \geq \left|\Delta_{y}(j, s+1)\right| = \left|\Delta_{x}(j, s+1)\right| > 2 \cdot 3^{k+1-s}$.
\end{enumerate}
This completes the verification of \eqref{verify part of cond 4}, subject to the condition that we verify \ref{main 2 cond 2} and \ref{main 2 cond 3}.

\par We now go into the detailed analysis of the subcases, as follows (here ``NT" stands for ``not threatening").\\

\par \textbf{Case NT1}: There exists some $\beta \in [s]$ such that $u_{\beta} = u_{s+1}$. Then Duplicator selects $v_{s+1} = v_{\beta}$, which is in direct keeping with Condition \ref{main 2 cond 1}. So this condition does not require further verification for \textbf{Case NT1}. By induction hypothesis \ref{main 2 cond 2} applied to round $\beta$, the nodes $u_{\beta}$ and $v_{\beta}$ have the same types, and hence so do $u_{s+1}$ and $v_{s+1}$. Thus Condition \ref{main 2 cond 2} holds for round $s+1$.

\par By induction hypothesis, the configuration $\big\{(x_{\ell}, y_{\ell}): \ell \in [s] \text{ and } u_{\ell} = u_{\beta}\big\}$, which is the set of previously chosen pairs that share the auxiliary node $u_{\beta}$, is winnable for $\Big\{\big(T_{1}\left(u_{\beta}\right)|_{M}, \overline{\sigma_{1}}\big), \big(T_{2}\left(v_{\beta}\right)|_{M}, \overline{\sigma_{2}}\big), \overline{\Sigma}, k\Big\}$. By Definition~\ref{cor node}, Duplicator can therefore select a corresponding node to $x_{s+1}$, in $T_{2}\left(v_{\beta}\right)\big|_{M}$, and set that to be $y_{s+1}$. Again by Definition~\ref{cor node}, this choice makes $\big\{(x_{\ell}, y_{\ell}): \ell \in [s+1] \text{ and } u_{\ell} = u_{s+1}\big\}$ a winnable configuration for $\Big\{\big(T_{1}\left(u_{s+1}\right)|_{M}, \overline{\sigma_{1}}\big), \big(T_{2}\left(v_{s+1}\right)|_{M}, \overline{\sigma_{2}}\big), \overline{\Sigma}, k\Big\}$. Thus it validates Condition \ref{main 2 cond 5} up to and including round $s+1$. \\

\par \textbf{Case NT2}: There exists no $\ell \in [s]$ such that $u_{\ell} = u_{s+1}$. By Remark \ref{super imp rem}, Duplicator can find $v_{s+1}$ in $T_{2}$ such that $v_{\ell} \neq v_{s+1}$ for all $\ell \in [s]$, and $u_{s+1}$ and $v_{s+1}$ have the same types. This choice immediately allows Conditions \ref{main 2 cond 1} and \ref{main 2 cond 2} to hold. By Lemma~\ref{same type win DEHR} and Definition~\ref{cor node}, she now selects $y_{s+1}$ as a corresponding node to $x_{s+1}$, in $T_{2}(v_{s+1})\big|_{M}$. This makes $\{(x_{s+1}, y_{s+1})\}$ a winnable configuration for $\Big\{\big(T_{1}(u_{s+1})|_{M}, \overline{\sigma_{1}}\big), \big(T_{2}(v_{s+1})|_{M}, \overline{\sigma_{2}}\big), \overline{\Sigma}, k\Big\}$, hence validating Condition \ref{main 2 cond 5}.\\

\par All the possible scenarios for Duplicator's response have now been analyzed, and we have come to the end of the inductive proof that indeed Duplicator can always maintain Conditions \ref{main 2 cond 1} through \ref{main 2 cond 6}. It is straightforward to verify that these conditions are stricter than what she needs to win the set-pebble game, i.e.\ these conditions imply Conditions \ref{EHR 1} through \ref{EHR 3}. We still provide here a quick explanation of this. For $i, j \in [s]$:
\begin{enumerate}
\item Verifying \ref{EHR 1}: If $\pi(x_{j}) = x_{i}$, then $\rho(x_{i}, x_{j}) = 1 < 3^{k+2-(i \vee j)}$, therefore these are close. From Condition \ref{main 2 cond 4}, we know that $u_{i} = u_{j}$ and $v_{i} = v_{j}$; from Condition \ref{main 2 cond 3}, we know that $\big\{(x_{i}, y_{i}), (x_{j}, y_{j})\big\}$ is winnable for $\Big\{\big(T_{1}(u_{i})|_{M}, \overline{\sigma_{1}}\big), \big(T_{2}(v_{i})|_{M}, \overline{\sigma_{2}}\big), \overline{\Sigma}, k\Big\}$. Then Condition \ref{DEHR win 2} for winning the DEHR on these two trees gives us $\pi(y_{j}) = y_{i}$.
\item Verifying \ref{EHR 2}: Immediate from Condition \ref{main 2 cond 5}.
\item Verifying \ref{EHR 3}: Immediate again from Condition \ref{main 2 cond 4}.
\end{enumerate}
This concludes the proof of Theorem \ref{main 2}.
\end{proof}

\section{Duplicator wins the types game with positive probability}\label{type win positive probab}
The final key to the proof of Theorem~\ref{main 1}, now that we have Theorem~\ref{main 2}, is the following theorem.
\begin{theorem}\label{main 3}
Fix any set of colours $\Sigma$ with a distinguished root colour $\col_{0}$, and positive integers $m$, $k$ and $L$. There exists a finite tree $T_{1}$ and a family $\mathcal{T}_{2}$ of infinite trees, such that 
\begin{enumerate}
\item the subtree $T_{1}|_{L}$ is isomorphic to a path of length $L$, and so is $T_{2}|_{L}$ for every $T_{2} \in \mathcal{T}_{2}$,
\item Duplicator wins $\Types\left[T_{1}, T_{2}, \Sigma, m, k\right]$ for every $T_{2} \in \mathcal{T}_{2}$.
\end{enumerate}
\end{theorem}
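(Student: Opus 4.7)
The plan is to construct $T_1$ with just enough structural redundancy that a single-stage pigeonhole argument guarantees type-saturation, and to let $\mathcal{T}_2$ be the natural infinite counterparts. Fix any specific tree $C$ of depth $m-1$ (for concreteness, the path of length $m-1$, with $|V(C)| = m$), and set $N = k \cdot (|\Sigma|-1)^{|V(C)|}$. Define $T_1$ to be the path $\phi_1 \to w_1 \to \cdots \to w_L$ of length $L$ whose depth-$L$ node $v_1 := w_L$ has exactly $N$ children, each rooting an isomorphic copy of $C$. Define $\mathcal{T}_2$ to be the family of all trees $T_2$ whose truncation $T_2|_L$ is a path of length $L$ terminating in a node $v_2$ having countably infinitely many children, each rooting an isomorphic copy of $C$. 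Each $T_2 \in \mathcal{T}_2$ is then infinite and has the required length-$L$ path as initial segment.

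Given Spoiler's colouring $\sigma_1$ of $T_1$, Duplicator's winning response is the following: (i) copy the values of $\sigma_1$ on the length-$L$ path onto the corresponding path of $T_2$; (ii) bijectively transfer the colourings of $T_1$'s $N$ branches onto an arbitrarily chosen $N$ of $T_2$'s branches; (iii) on each of the remaining (infinitely many) branches of $T_2$, apply a fixed ``majority'' colouring $\phi^*$ of $C$, where $\phi^*$ is any colouring of $C$ that is used at least $k$ times among Spoiler's $N$ branch colourings in $T_1$. Such a $\phi^*$ exists by the pigeonhole principle, since the $N = k \cdot (|\Sigma|-1)^{|V(C)|}$ branches of $T_1$ are distributed among at most $(|\Sigma|-1)^{|V(C)|}$ possible colourings.

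Verification amounts to showing $n_\gamma^{(1)} \wedge k = n_\gamma^{(2)} \wedge k$ for every $\gamma \in \Gamma_{\Sigma,m,k}$. Types of path nodes at depth $\leq L-m$ are determined purely by the path colouring and so match trivially; types of path nodes at depth $> L-m$ also depend on $v_1$ (resp.\ $v_2$), but a short induction shows that the depth-$m$ type of a node determines its depth-$j$ type for every $j \leq m$, so it suffices to check that $v_1$ and $v_2$ have the same depth-$m$ type. Their child-type multisets capped at $k$ do agree: for the ``majority'' root-type $\gamma^*$ produced by $\phi^*$, $T_1$'s count of $\gamma^*$-typed children is at least $k$ (from the $\geq k$ many $\phi^*$-coloured branches) and $T_2$'s is unboundedly larger (due to the extras), both capping at $k$; for any other root-type $\gamma' \neq \gamma^*$, the counts are identical on both sides, contributed only by the $N$ copied branches. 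The same reasoning handles types appearing at internal nodes of branches: any type occurring within a $\phi^*$-coloured branch already occurs at least $k$ times in $T_1$ and unboundedly often in $T_2$, so both cap at $k$; other types have identical counts on both sides. The only subtlety is propagating the agreement of $v_1$- and $v_2$-types up the path; this follows from the fact that depth-$j$ capped type-count vectors (for $j \leq m$) can be recovered from the capped depth-$m$ vector by iterated projection, with the cap collapsing any ambiguity.
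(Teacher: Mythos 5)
Your construction correctly handles the types game: the pigeonhole gives a majority colouring $\phi^*$ that saturates the cap $k$, the projection argument (that depth-$m$ type determines depth-$j$ type for $j\le m$ under capping) is valid, and the verification that $v_1=w_L$ and $v_2=z_L$ have matching capped child-type vectors is sound. But your $\mathcal{T}_2$ is, up to isomorphism, a single tree: a path of length $L$ whose depth-$L$ node has countably infinitely many children, each supporting a copy of $C$. The whole point of the theorem — made explicit in Remark~\ref{positive probab} and essential to Theorem~\ref{main 1} — is that $\mathcal{T}_2$ must have positive $\mu$-measure. Under any Galton--Watson type measure (and more generally under any measure satisfying the hypotheses of Theorem~\ref{main} that one would actually care about), a node having infinitely many children is a null event, so your $\mathcal{T}_2$ is $\mu$-null. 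Your construction therefore proves the literal sentence but delivers a family that is useless for the application.

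The gap is more than cosmetic, because your strategy cannot be repaired by simply replacing the infinitely many $C$-copies with one arbitrary infinite branch. With an arbitrary branch $t_2$ hanging below $z_L$, Duplicator can no longer ``copy a majority colouring'': she must find a colouring of $t_2$ whose resulting types all already occur at least $k$ times in the finite part. This is exactly where the paper introduces the machinery of \emph{deficient} and \emph{adequate} type sets and, crucially, Lemma~\ref{finite tree noncolourable} (the compactness argument). That lemma lets one associate to each deficient set $S$ a \emph{finite} witness tree $T(S)$, so that $T_1$ can carry $k\cdot N(T(S))$ copies of every $T(S)$; the union $X$ of the type-closures of all these witnesses (Lemma~\ref{union ubi}) is then adequate, guaranteeing Duplicator a colouring of an \emph{arbitrary} infinite $t_2$ with all types landing in $X$, hence already represented $\ge k$ times. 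Your single tree $C$ plays the role of one $T(S)$ for one $S$, but there is no reason a single such witness can absorb the types of every possible infinite branch. In short: your argument shows the types game is winnable against one particular infinite tree, whereas the theorem requires a positive-measure family, and the adequacy/compactness framework is precisely the tool that bridges that gap.
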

Note that when we apply it to prove Theorem~\ref{main 1}, we use the relevant colour set, and parameters mentioned in \eqref{parameters main 2}. 


\par Recall that $T(v)$ denotes the subtree at the node $v$ in $V(T)$. Given a colouring $\sigma: V(T) \rightarrow \Sigma$, we shall, as before, abuse notation slightly and consider the coloured tree $\left(T(v), \sigma\right)$, where $\sigma$ automatically means the restriction of the colouring to $T(v)$. Further, as before, we denote its truncation consisting of nodes at generations at most $n$, along with the assignment $\sigma$ restricted to this truncation, by $\left(T(v)|_{n}, \sigma\right)$. 

\par All the definitions that follow are in terms of the colour set $\Sigma$ containing the special root colour $\col_{0}$, depth-parameter $m$ and cutoff $k$.
\begin{defn}\label{deficient}
For any set of types $S \subseteq \Gamma$, we call $S$ \emph{unavoidable} for a tree $T$ if for every $(\Sigma, \col_{0})$-rooted colouring $\sigma$ of $T$, there exists some $v \in V(T)$ such that $\Type(v) = \Type _{\Sigma, m, k, (T, \sigma)}(v)$ belongs to $S$. 
\par We define $S$ to be \emph{deficient} for a tree $T$ if $S^{c}$ is unavoidable for $T$. In other words, if $S$ is deficient for $T$, then there exists no $(\Sigma, \col_{0})$-rooted colouring $\sigma$ of $T$ such that
\begin{equation}\label{wanting eq}
\left\{\Type(u): u \in V(T)\right\} \subseteq S,
\end{equation}
where $\Type(u) = \Type_{\Sigma, m, k, (T, \sigma)}$ is the type of $v$ with respect to $(T, \sigma)$.
\end{defn}

\par The following lemma is crucial in the proof of Theorem~\ref{main 3}, and relies on a compactness argument.
\begin{lemma}\label{finite tree noncolourable}
Suppose a set of types $S \subseteq \Gamma$ is deficient for some tree. Then it is deficient for some \emph{finite} tree.
\end{lemma}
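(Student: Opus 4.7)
My plan is to establish the contrapositive via a standard compactness (K\"onig-type) argument. I will assume that every finite tree admits an $S$-valid rooted colouring -- one in which every node's $(\Sigma, m, k)$-type lies in $S$ -- and derive that \emph{every} tree $T$ admits such a colouring, contradicting the existence of any tree deficient for $S$. Fix an arbitrary tree $T$ and endow $\Sigma^{V(T)}$ with the product topology, which is compact by Tychonoff. For each $v \in V(T)$, the condition $\Type_{T,\sigma}(v) \in S$ depends only on $\sigma$ restricted to the finite set $V(T(v)|_{m})$, since the $(\Sigma, m, k)$-type of $v$ is determined by the coloured depth-$m$ subtree rooted at $v$; hence this condition defines a clopen subset of $\Sigma^{V(T)}$. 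The $(\Sigma,\col_{0})$-rooted constraint $\sigma(u) = \col_{0} \Leftrightarrow u = \phi$ is likewise clopen.

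The set of $S$-valid rooted colourings of $T$ is therefore an intersection of closed sets in a compact space. By the finite intersection property, it suffices to show that for every finite $F \subseteq V(T)$ there exists a rooted colouring $\sigma$ of $T$ with $\Type_{T,\sigma}(v) \in S$ for all $v \in F$. To produce such a $\sigma$, I will take the finite rooted subtree $\tilde{T}$ of $T$ spanned by the root $\phi$, the set $F$, and the depth-$m$ descendants $T(v)|_{m}$ for each $v \in F$; by the standing hypothesis, $\tilde{T}$ admits an $S$-valid rooted colouring $\tilde{\sigma}$. I then extend $\tilde{\sigma}$ arbitrarily (say, by assigning $\col_{1}$) to the remaining vertices of $T$ to form $\sigma$.

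The key observation that closes the argument is that $\tilde{T}(v)|_{m} = T(v)|_{m}$ for every $v \in F$: the inclusion $\subseteq$ holds because $\tilde{T} \subseteq T$, and $\supseteq$ holds by construction of $\tilde{T}$. Since the type of a node depends only on the coloured depth-$m$ subtree beneath it, this identity forces $\Type_{T,\sigma}(v) = \Type_{\tilde{T},\tilde{\sigma}}(v) \in S$ for every $v \in F$, completing the finite-intersection step and hence the proof. The main (mild) obstacle lies in selecting $\tilde{T}$ correctly: it must contain the entire depth-$m$ subtree beneath each $v \in F$, and one must be untroubled by the fact that the types of nodes in $\tilde{T} \setminus F$ under $\tilde{\sigma}$ may disagree with their $T$-types -- this is harmless because the finite intersection only constrains $F$-types, and the $S$-validity of $\tilde{\sigma}$ on $\tilde{T}$ provides what we need for nodes in $F$ \emph{a fortiori}.
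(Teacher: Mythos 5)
Your proof is correct and is a legitimate alternative to the paper's; both are compactness arguments, just packaged differently. The paper argues by contradiction with a hands-on diagonalisation: assuming $S$ is deficient for a tree $T$ but not for any finite tree, it picks $S$-valid colourings $\sigma_n$ of the truncations $T|_n$ and extracts, via a diagonal subsequence along a breadth-first enumeration of $V(T)$, a limiting colouring $\sigma$ of $T$ whose types all lie in $S$, contradicting deficiency. You prove the contrapositive directly via Tychonoff: the $S$-valid rooted colourings form an intersection of closed sets in the compact product $\Sigma^{V(T)}$, and you verify the finite intersection property by pulling back an $S$-valid colouring from a finite subtree $\tilde{T}$ containing the full depth-$m$ cone $T(v)|_m$ for each $v$ in the finite window $F$, then extending arbitrarily to $T$. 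The load-bearing observation in both proofs is the locality of types -- that $\Type_{(T,\sigma)}(v)$ depends only on $\sigma$ restricted to $T(v)|_m$ -- which the paper uses implicitly when it equates the type of $v_i$ in $(T,\sigma)$ with that in $(T|_{n^{(k)}_k}, \sigma_{n^{(k)}_k})$ for large $k$, and which you isolate and exploit explicitly. Your version avoids the diagonal bookkeeping and does not need a countable enumeration of $V(T)$; the paper's is more elementary and self-contained. One small inaccuracy in your write-up: the global rooted constraint $\sigma(u) = \col_0 \Leftrightarrow u = \phi$ is an intersection of infinitely many clopen single-coordinate conditions, hence closed but not necessarily open when $T$ is infinite; this is harmless since only closedness is used. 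Both arguments implicitly rely on $T$ being locally finite, so that the cones $T(v)|_m$ and truncations $T|_n$ are finite trees.
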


\begin{proof}
We prove this via contradiction and a compactness argument. Suppose the claim of the lemma is false. Then for every $n \in \mathbb{N}$, we must have $S$ \emph{not} deficient for $T|_{n}$. This means that we can find a $(\Sigma, \col_{0})$-rooted colouring $\sigma_{n}$ of $T|_{n}$ such that \eqref{wanting eq} holds for $\sigma = \sigma_{n}$ and the subtree $T|_{n}$. 
\par Name the nodes of the tree $T$ in a breadth-first manner (with siblings labeled in a lexicographic order) as $\phi = v_{0}, v_{1}, v_{2} \ldots$. Since $\Sigma$ is a finite set, hence we can find some colour $c_{1}$ in $\Sigma$ and an infinite sequence $N_{1} = \left\{n^{(1)}_{k}: k \in \mathbb{N}\right\}$ such that $\sigma_{n^{(1)}_{k}}(v_{1}) = c_{1}$ for all $k$. Next, we can find a colour $c_{2}$ and an infinite subsequence $N_{2} = \left\{n^{(2)}_{k}: k \in \mathbb{N}\right\}$ of $N_{1}$, such that $\sigma_{n^{(2)}_{k}}(v_{2}) = c_{2}$ for all $k$.
\par Continuing like this, for every $i \in \mathbb{N}$, we can find colour $c_{i}$ and an infinite subsequence $N_{i} = \left\{n^{(i)}_{k}: k \in \mathbb{N}\right\}$ with $N_{i} \subseteq N_{i-1}$, such that $\sigma_{n^{(i)}_{k}}(v_{i}) = c_{i}$ for all $k$. Now, consider the diagonal subsequence $N = \left\{n^{(k)}_{k}: k \in \mathbb{N}\right\}$. By our construction of the sequences, we have, for every $i \geq 1$, 
$$\sigma_{n^{(k)}_{k}}(v_{i}) = c_{i} \text{ for all } k \geq i.$$
Consider now the following $(\Sigma, \col_{0})$-rooted colouring $\sigma$ of $T$, defined by $\sigma(v_{i}) = c_{i}$ for all $i \geq 1$. Then for every $i \in \mathbb{N}$, for all sufficiently large $k$, note that
\begin{equation}
\Type(v_{i}) \text{ with respect to } \left(T, \sigma\right) \text{ concides with } \Type(v_{i}) \text{ with respect to } \left(T\Big|_{n^{(k)}_{k}}, \sigma_{n^{(k)}_{k}}\right),
\end{equation}
and the latter must belong to $S$, since by our assumption, $\sigma_{n^{(k)}_{k}}$ satisfies \eqref{wanting eq} with respect to $S$. This shows that $\sigma$ is a $(\Sigma, \col_{0})$-rooted colouring of the entire $T$ such that all the resulting types are contained in $S$, thus contradicting the hypothesis of the lemma. Hence we conclude that indeed, there exists some $n \in \mathbb{N}$ such that $S$ is deficient for $T|_{n}$.
\end{proof}
If a subset $S$ is deficient for some tree, choose and fix, by Lemma~\ref{finite tree noncolourable}, a finite $T(S)$ such that $S$ is deficient for $T(S)$.
\begin{defn}
A subset $S$ of $\Gamma$ is defined to be \emph{adequate} if it is not deficient for any tree in $\mathcal{T}$. In other words, given any tree $T \in \mathcal{T}$, there exists at least one $(\Sigma, \col_{0})$-rooted colouring $\sigma$ of $T$ such that the types of all the nodes in $(T, \sigma)$ are contained in $S$.
\end{defn}
Set
\begin{equation}\label{not ubi}
Q = Q(\Sigma, m, k) = \left\{S \subseteq \Gamma: S \text{ is not adequate}\right\},
\end{equation}
or, in other words, a set of types belongs to $Q$ if and only if it is deficient for some tree. Since $\Gamma$ is finite, so is $Q$. For any finite tree $T \in \mathcal{T}$, let $N(T)$ denote the number of possible colourings to $T$ where the colour $\col_{0}$ is not used \Big(in other words, $N(T) = \left\{|\Sigma|-1\right\}^{|V(T)|}$\Big). 

\subsection{The construction of the two trees} \label{construction}
This subsection is concerned with the construction of the finite tree $T_{1}$ and the family $\mathcal{T}_{2}$ of infinite trees, as mentioned in Theorem~\ref{main 3} above. For arbitrary $L$, we let $T_{1}|_{L}$ be isomorphic to a path of length $L$. Let $T_{1}|_{L} = \left\{\phi_{1} \rightarrow w_{1} \rightarrow \cdots \rightarrow w_{L}\right\}$. We let $w_{L}$ have children $u_{S,1}, \ldots, u_{S, k \cdot N(T(S))}$ for every $S \in Q$. For each $1 \leq i \leq k \cdot N(T(S))$, we have $T_{1}\left(u_{S, i}\right) = T(S)$, i.e.\ we hang a copy of $T(S)$ from every one of $u_{S,1}, \ldots, u_{S, k \cdot N(T(S))}$. This completes the description of $T_{1}$.

\par The trees in the family $\mathcal{T}_{2}$ are constructed as follows. Consider \emph{any} infinite tree $t_{2}$ (i.e.\ any deterministic infinite tree). We construct a tree $T_{2}$ in the family $\mathcal{T}_{2}$ corresponding to $t_{2}$ as follows: we let $T_{2}|_{L}$ again be a path of length $L$. Let $T_{2}|_{L} = \left\{\phi_{2} \rightarrow z_{1} \rightarrow \cdots \rightarrow z_{L}\right\}$. For every $S \in Q$, we let $z_{L}$ have children $v_{S,1}, \ldots, v_{S, k \cdot N(T(S))}$, such that for each $1 \leq i \leq k \cdot N(T(S))$, the subtree $T_{2}\left(v_{S, i}\right) = T(S)$. We also let $z_{L}$ have an additional child $v$ such that $T_{2}(v) \cong t_{2}$. The family $\mathcal{T}_{2}$ is the collection of $T_{2}$ corresponding to $t_{2}$ for all possible infinite trees $t_{2}$. 

\begin{rem}\label{positive probab}
Clearly, if $\mu$ is a probability measure on $\mathcal{T}$ satisfying the condition given in Theorem \ref{main}, then we have $\mu[\mathcal{T}_{2}] > 0$. 
\end{rem}

\par Rough images of $T_{1}$ and $T_{2}$ corresponding to $t_{2}$ are shown in Figures~\ref{finite_image} and \ref{infinite_image} \big(only the copies of $T(S_{1})$ and $T(S_{2})$ are shown as an example for $S_{1}, S_{2} \in Q$, but it is implicitly understood that copies of $T(S)$ for every other $S \in Q$ are present\big).
\begin{figure}[h!]
  \centering
    \includegraphics[width=0.7\textwidth]{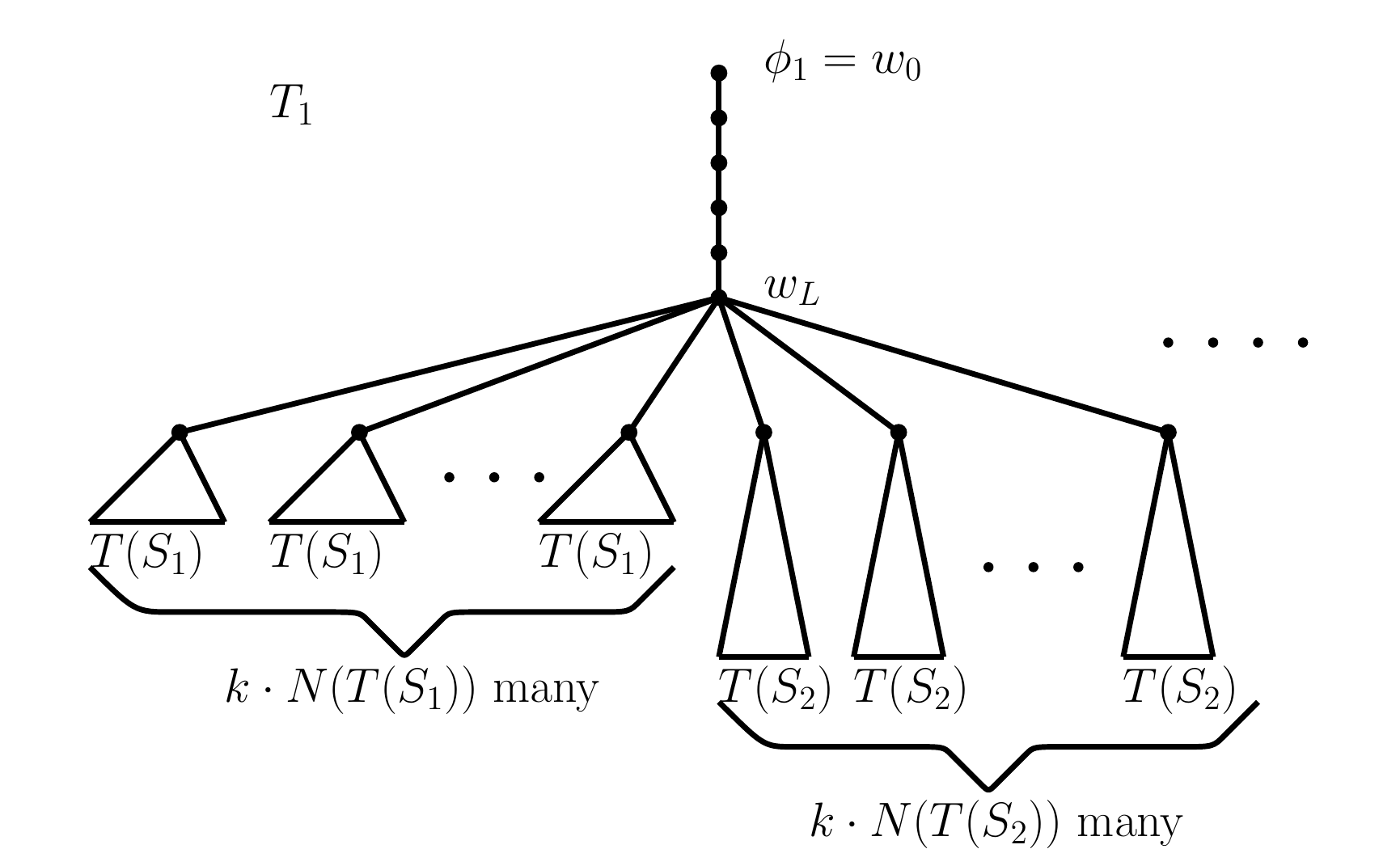}
  \caption{The finite tree $T_{1}$}
  \label{finite_image}
\end{figure}

\begin{figure}[h!]
  \centering
    \includegraphics[width=0.7\textwidth]{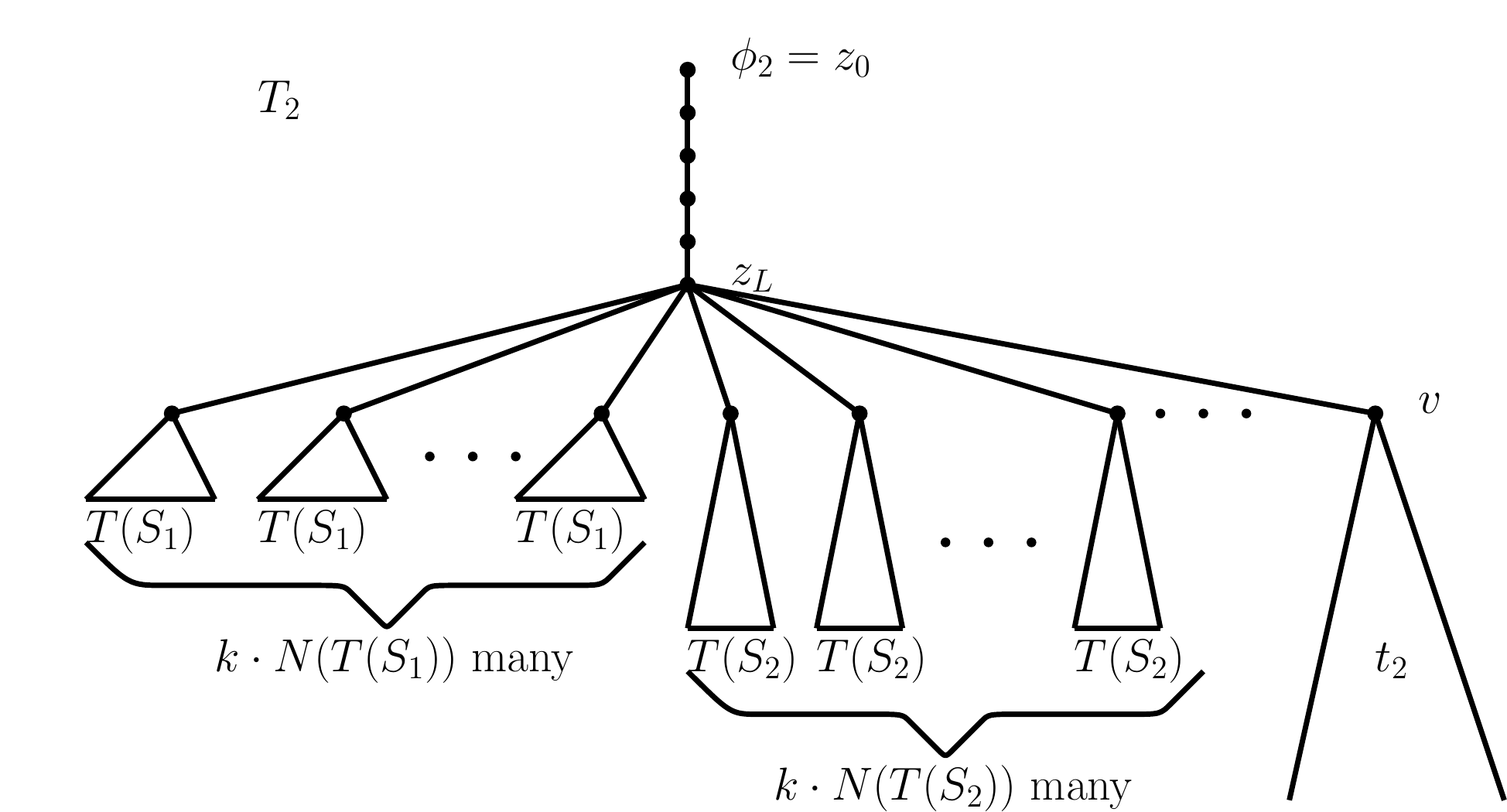}
  \caption{The infinite tree $T_{2}$ corresponding to $t_{2}$}
  \label{infinite_image}
\end{figure}

\par In the following section we give the proof of Theorem~\ref{main 3}. It consists of a few parts. In particular, we shall include a lemma as part of the proof. For this reason, we include this proof as a separate subsection.

\subsection{Proof of Theorem~\ref{main 3}:} Firstly, we fix any $T_{2} \in \mathcal{T}_{2}$. Suppose Spoiler assigns the $(\Sigma, \col_{0})$-rooted colouring $\sigma_{1}$ to $T_{1}$. By the pigeon hole principle, for each $S \in Q$, we must have at least one colouring $\sigma_{S}: V(T(S)) \rightarrow \Sigma \setminus \{\col_{0}\}$ that appears at least $k$ times among the copies of $T(S)$ in $T_{1}$, i.e.\ there exist $k$ positive integers $1 \leq i_{1}^{S} < i_{2}^{S} < \ldots < i_{k}^{S} \leq k \cdot N(T(S))$, such that 
\begin{equation}\label{many}
\Big(T_{1}\left(u_{S, i^{S}_{j}}\right), \sigma_{1}\Big) \cong \big(T(S), \sigma_{S}\big), \text{ for all } 1 \leq j \leq k,
\end{equation}
where the symbol $\cong$ means that there is a graph isomorphism between these two that preserves the roots and colours of nodes.

\par Corresponding to this $\sigma_{S}$, define a colouring $\widetilde{\sigma}_{S}: V(T(S)) \rightarrow \Sigma$ such that:
\begin{equation} \label{pseudo}
\widetilde{\sigma}_{S}(v) = 
  \begin{cases} 
   \col_{0} & \text{if } v \text{ is the root } \phi_{S} \text{ of } T(S), \\
   \sigma_{S}(v) & \text{otherwise }.
  \end{cases}
\end{equation}
Note that $\widetilde{\sigma}_{S}$ is a $(\Sigma, \col_{0})$-rooted colouring of $T(S)$.

\begin{lemma}\label{union ubi}
Fix the colour set $\Sigma$, the cut-off $k$ and the depth $m$. For every $S \subseteq Q$, we consider the coloured tree $\left(T(S), \widetilde{\sigma}_{S}\right)$. For every $v \in V(T(S))$, we consider its type with respect to this coloured tree. Define
\begin{equation}\label{closure of S}
\widehat{S} = \left\{\Type_{\left(T(S), \widetilde{\sigma}_{S}\right)}(v): v \in V(T(S))\right\},
\end{equation}
and the set $X = \bigcup_{S \in Q} \widehat{S}.$ Then $X$ is adequate.
\end{lemma}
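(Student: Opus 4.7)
The plan is to argue by contradiction and exploit the self-referential nature of the definition of $X$. Suppose $X$ is not adequate. Then by the definition of $Q$ in \eqref{not ubi}, we have $X \in Q$. Consequently, the finite tree $T(X)$ chosen via Lemma~\ref{finite tree noncolourable} is well-defined, and since $X \in Q$ the construction of $T_1$ guarantees at least $k \cdot N(T(X))$ copies of $T(X)$ hanging off $w_L$. The pigeonhole argument in the proof of Theorem~\ref{main 3} therefore supplies a colouring $\sigma_X : V(T(X)) \to \Sigma \setminus \{\col_0\}$ and, via \eqref{pseudo}, its rooted modification $\widetilde{\sigma}_X$, and hence the set $\widehat{X}$ is also well-defined.

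The key observation I would then make is that $\widetilde{\sigma}_X$ is a genuine $(\Sigma, \col_0)$-rooted colouring of $T(X)$: by \eqref{pseudo} the root is sent to $\col_0$, while every non-root vertex inherits a colour from $\sigma_X$, which by construction avoids $\col_0$. Thus Definition~\ref{rooted colouring} is satisfied. By the definition of $\widehat{X}$ in \eqref{closure of S}, every type appearing in $(T(X), \widetilde{\sigma}_X)$ lies in $\widehat{X}$. Since $X \in Q$, the set $\widehat{X}$ is one of the summands in $X = \bigcup_{S \in Q} \widehat{S}$, so $\widehat{X} \subseteq X$.

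This means that the $(\Sigma, \col_0)$-rooted colouring $\widetilde{\sigma}_X$ of $T(X)$ witnesses $\{\Type(u) : u \in V(T(X))\} \subseteq X$, directly contradicting the fact that $X$ is deficient for $T(X)$ (see Definition~\ref{deficient}). Therefore $X$ must be adequate.

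I do not anticipate a substantial obstacle: the proof is essentially a diagonal/self-reference argument that exploits the built-in closure of $X$ under the operation $S \mapsto \widehat{S}$. The only things to be careful about are (i) noting that assuming $X \in Q$ is precisely what lets us speak of $T(X)$, $\sigma_X$, $\widetilde{\sigma}_X$, and $\widehat{X}$ in the first place, and (ii) verifying explicitly that $\widetilde{\sigma}_X$ is a legitimate $(\Sigma, \col_0)$-rooted colouring so that it can serve as a witness against deficiency of $X$ for $T(X)$.
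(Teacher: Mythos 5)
Your proof is correct and follows essentially the same route as the paper: assume $X$ is not adequate, so $X \in Q$, observe that $\widehat{X}$ is then one of the sets whose union is $X$, and use the colouring $\widetilde{\sigma}_X$ of $T(X)$ as a witness contradicting deficiency. The extra remarks you include (that $X \in Q$ is what makes $T(X)$, $\sigma_X$, $\widetilde{\sigma}_X$, $\widehat{X}$ well-defined, and that $\widetilde{\sigma}_X$ is a legitimate $(\Sigma, \col_0)$-rooted colouring) are sound and merely make explicit points the paper leaves implicit.
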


\begin{proof}
Suppose not. Then $X \in Q$. Consequently, there exists no $(\Sigma, \col_{0})$-rooted colouring $\sigma$ of $T(X)$ such that, 
$$\left\{\Type_{(T(X), \sigma)}(v): v \in V(T(X))\right\} \subseteq X.$$
However, when we consider the the $(\Sigma, \col_{0})$-rooted colouring $\widetilde{\sigma}_{X}$ of $T(X)$, we indeed get
$$\left\{\Type_{(T(X), \widetilde{\sigma}_{X})}(v): v \in V(T(X))\right\} = \widehat{X} \subseteq X,$$ by \eqref{closure of S} and the definition of $X$. This brings us to a contradiction. Hence indeed $X$ is adequate.
\end{proof}

\par \textbf{Duplicator's response:} We construct a $(\Sigma, \col_{0})$-rooted colouring $\sigma_{2}$ on $T_{2}$, which will be Duplicator's winning response. For all $1 \leq i \leq L$, we set $\sigma_{2}(z_{i}) = \sigma_{1}(w_{i})$, and for all $S \in Q$, and all $1 \leq i \leq k \cdot N(T(S))$, we set
\begin{equation}\label{exact duplicate}
\left(T_{2}\left(v_{S, i}\right), \sigma_{2}\right) \cong \left(T_{1}\left(u_{S, i}\right), \sigma_{1}\right),
\end{equation}
where once again $\cong$ indicates an isomorphism between the coloured trees as previously described. We define $\sigma_{2}$ on $T_{2}(v)$, the infinite branch, in the following way:
\begin{enumerate}[label={(Step \arabic*)},leftmargin=*]
\item \label{step 1} Since $X$ is adequate by Lemma~\ref{union ubi}, there exists a $(\Sigma, \col_{0})$-rooted colouring $\widetilde{\sigma}_{2}$ of $T_{2}(v)$ such that 
\begin{equation}\label{good eq}
\left\{\Type_{(T_{2}(v), \widetilde{\sigma}_{2})}(u): u \in V\left(T_{2}(v)\right)\right\} \subseteq X.
\end{equation}
Note that this assigns colour $\col_{0}$ to $v$.
\item \label{step 2} Consider the type $\gamma$ of the node $v$ in $\left(T_{2}(v), \widetilde{\sigma}_{2}\right)$. Note that the type $\gamma$ has the colour $\col_{0}$ at its root, since $\widetilde{\sigma}_{2}$ is a $(\Sigma, \col_{0})$-rooted colouring. This type must belong to $X$ because of \eqref{good eq}. By definition of $X$ from Lemma~\ref{union ubi}, there exists some $S_{0} \in Q$ such that $\gamma \in \overline{S_{0}}$. By \eqref{closure of S}, this means that there must exist some node in $V(T(S_{0}))$ whose type is $\gamma$ with respect to the colouring $\widetilde{\sigma}_{S_{0}}$. Since $\widetilde{\sigma}_{S_{0}}$ is a $(\Sigma, \col_{0})$-rooted colouring of $T(S_{0})$, the only node that has the colour $\col_{0}$ is the root $\phi_{S_{0}}$ of $T(S_{0})$, hence $\phi_{S_{0}}$ must be the node with type $\gamma$. Thus:
\begin{equation}\label{gamma}
\gamma = \Type_{\left(T(S_{0}), \widetilde{\sigma}_{S_{0}}\right)}\left(\phi_{S_{0}}\right).
\end{equation}
Duplicator now defines the following assignment on $T_{2}(v)$:
\begin{equation} \label{on infinite subtree}
\sigma_{2}(u) = 
  \begin{cases} 
   \sigma_{S_{0}}\left(\phi_{S_{0}}\right) & \text{if } u = v, \\
   \widetilde{\sigma}_{2}(u) & \text{if } u \in T_{2}(v) \setminus \{v\}.
  \end{cases}
\end{equation} 
\end{enumerate}

\subsection{Justifying that this is a winning response:} We have to verify \eqref{win type} holds. We shall first show that \eqref{win type} holds for the coloured subtrees $\Big(T_{1}\left(w_{L}\right), \sigma_{1}\Big)$ and $\Big(T_{2}\left(z_{L}\right), \sigma_{2}\Big)$, and also show that the types of $w_{L}$ and $z_{L}$ are the same. This will then enable us to conclude that the types of $w_{i}$ and $z_{i}$ are the same for each $1 \leq i \leq L-1$, as well as those of the roots $\phi_{1}$ and $\phi_{2}$. This will conclude our verification.

\par For any $\eta \in \Gamma$, let $m^{(1)}_{\eta}$ denote the number of nodes $u$ in $T_{1}\left(w_{L}\right) \setminus \{w_{L}\}$ such that $\Type(u) = \eta$ with respect to $(T_{1}, \sigma_{1})$; let $m^{(2)}_{\eta}$ be the corresponding number in $T_{2}\left(z_{L}\right) \setminus \{z_{L}\}$. Because of \eqref{exact duplicate}, we only need to worry about the types $\eta$ that appear in $\left(T_{2}(v), \sigma_{2}\right)$, and the types of the nodes $w_{L}$ and $z_{L}$. We divide our analysis into the following three cases:

\begin{enumerate}
\item \label{need later} The type $\eta$ is the type of $v$ itself. By \eqref{gamma} and \eqref{on infinite subtree}, it is immediate that $\eta$ is then the type of $\phi_{S_{0}}$ with respect to $\sigma_{S_{0}}$ (since $\eta$ is simply $\gamma$ with the colour at the root switched from $\col_{0}$ to $\sigma_{S_{0}}\left(\phi_{S_{0}}\right)$). From \eqref{many}, it follows that $\eta$ is the type of $u_{S_{0}, i^{S_{0}}_{j}}$ for every $1 \leq j \leq k$. Therefore, $m^{(1)}_{\eta} \geq k$. Moreover, by \eqref{exact duplicate}, we know that $\eta$ is also the type of $v_{S_{0}, i^{S_{0}}_{j}}$ for every $1 \leq j \leq k$. Hence, we have $m^{(2)}_{\eta} \geq k+1$. Therefore, \eqref{win type} holds for $\eta$.   
\item The type $\eta$ appears in $\big(T_{2}(v), \sigma_{2}\big)$, but is not the type of $v$ itself. Then $\eta \in X$, which implies that $\eta \in \widehat{S}$ for some $S \in Q$. By \eqref{closure of S}, this means that there exists some $u \in V(T(S))$ such that the $\Type(u) = \eta$ with respect to $\big(T(S), \widetilde{\sigma}_{S}\big)$. Since $\eta$ does not have $\col_{0}$ at the root, hence $u$ is not the root $\phi_{S}$ of $T(S)$. Hence, we actually can conclude that 
$$\Type_{\big(T(S), \sigma_{S}\big)}(u) = \eta.$$
By \eqref{many}, this tells us that for $1 \leq j \leq k$, the subtree $\Big(T_{1}\left(u_{S, i^{S}_{j}}\right), \sigma_{1}\Big)$, being isomorphic to $\big(T(S), \sigma_{S}\big)$, contains at least one node of type $\eta$. Consequently, $m^{(1)}_{\eta} \geq k$. By \eqref{exact duplicate}, each copy $\left(T_{2}\left(v_{S, i^{S}_{j}}\right), \sigma_{2}\right)$ will also contain at least one occurrence of the type $\eta$, and $\big(T_{2}(v), \sigma_{2}\big)$ contains at least one more. Hence $m^{(2)}_{\eta} \geq k+1$. Hence, once again, \eqref{win type} holds.
\item \label{three} Finally, we show that the types of $w_{L}$ and $z_{L}$ are the same. Note that, by \eqref{exact duplicate}, the types of $u_{S, i}$ and $v_{S, i}$ are the same for every $S \in Q$ and every $1 \leq i \leq k \cdot N(T(S))$. For the type $\eta$ of $v$, in \eqref{need later}, we have already shown that $w_{L}$ has at least $k$ children $u_{S_{0}, i^{S_{0}}_{j}}$, $1 \leq j \leq k$, with type $\eta$, and $z_{L}$ has at least $k+1$ children: $v_{S_{0}, i^{S_{0}}_{j}}$, $1 \leq j \leq k$, and $v$, with type $\eta$. Consequently, the number of children of each $(\Sigma, m, k)$-type, truncated at $k$, of both $w_{L}$ and $z_{L}$ are the same. By the recursive definition of types in Definition~\ref{type def}, this shows that $w_{L}$ and $z_{L}$ have the same $(\Sigma, m+1, k)$-type, hence also the same $(\Sigma, m, k)$-type. 
\end{enumerate}

This completes the verification that \eqref{win type} holds for the subtrees $\left(T_{1}\left(w_{L}\right), \sigma_{1}\right)$ and $\left(T_{2}\left(z_{L}\right), \sigma_{2}\right)$. Having shown in \eqref{three} that $w_{L}$ and $z_{L}$ have the same $(\Sigma, m, k)$-types, the recursive definition of types in Definition~\ref{type def} and the fact that we have defined $\sigma_{2}(z_{i}) = \sigma_{1}(w_{i})$ for every $0 \leq i \leq L-1$, allow us to conclude that the nodes $w_{i}$ and $z_{i}$ have the same $\left(\Sigma, m+L-i, k\right)$-types. Hence they also have the same $(\Sigma, m, k)$-types. 

This brings us to the end of the justification that indeed, Duplicator wins $\Types\left[T_{1}, T_{2}, \Sigma, m, k\right]$. It is now immediate that combining Theorem~\ref{main 2}, Theorem~\ref{main 3} and Remark~\ref{positive probab}, we get the desired Theorem~\ref{main 1}.

\bibliography{finiteEMSO}

\end{document}